\title{Cohomology of Artin  groups of type $\tilde{A}_n$, $B_n$ and
applications}
\author{Filippo Callegaro}
\address{Scuola Normale Superiore\\Piazza dei Cavalieri, 7\\\newline
56126 Pisa\\Italy}
\email{f.callegaro@sns.it}
\urladdr{}
\author{Davide Moroni}
\email{davide.moroni@isti.cnr.it}
\urladdr{}
\author{Mario Salvetti}
\address{Dipartimento di Matematica ``G.Castelnuovo''\\Universit\`a di
Roma ``La Sapienza''\\\newline
Piazza Aldo Moro, 2\\00185 Roma\\Italy\\\newline
and\\\newline
Istituto di Scienza e Tecnologie dell'Informazione ISTI-CNR\\\newline
Via G. Moruzzi 1\\56124 Pisa\\Italy
\vspace{3pt}\\\newline
Dipartimento di Matematica ``L.Tonelli''\\
Universit\`a di Pisa\\\newline
Largo B. Pontecorvo, 5\\56127 Pisa\\Italy}
\email{salvetti@dm.unipi.it}
\urladdr{}
\def\cnewtheorem#1[#2]#3{\newtheorem{#1}{#3}[section]
\expandafter\let\csname c@#1\endcsname\c@teo}
\def\SetFigFont#1#2#3#4#5{\small}
\def\adjustlabel<#1,#2>#3{\smash{\rlap{\kern #1 \raise #2\hbox{#3}}}}
\newtheorem{teo}{Theorem}[section]
\theoremstyle{definition}
\newtheorem*{rem}{Remark}
\def\W{W}
\newcommand{\ph}[0]{\varphi}
\newcommand{\sst}[0]{\subset}
\newcommand{\qbin}[2]{ \left[ \begin{array}{c} #1 \\ #2
    \end{array} \right] }
\renewcommand{\ni}[0]{\noindent}
\newcommand{\tss}[0]{\supset}
\newcommand{\pmu}[0]{{\pm 1}}
\newcommand{\CA}[0]{\overline{C}}
\begin{document}
%

\begin{htmlabstract}
<p class="noindent"> We consider two natural embeddings between Artin
groups: the group G<sub>&Atilde;<sub>n-1</sub></sub> of type
&Atilde;<sub>n-1</sub> embeds into the group
G<sub>B<sub>n</sub></sub> of type B<sub>n</sub>;
G<sub>B<sub>n</sub></sub> in turn embeds into the classical braid
group Br<sub>n+1</sub>:=G<sub>A<sub>n</sub></sub> of type
A<sub>n</sub>.  The cohomologies of these groups are related, by
standard results, in a precise way. By using techniques developed in
previous papers, we give precise formulas (sketching the proofs) for
the cohomology of G<sub>B<sub>n</sub></sub> with coefficients over the
module <b>Q</b>[q<sup>&plusmn;1</sup>,t<sup>&plusmn;1</sup>], where
the action is (-q)&ndash;multiplication for the standard generators
associated to the first n-1 nodes of the Dynkin diagram, while is
(-t)&ndash;multiplication for the generator associated to the last
node.  </p> <p class="noindent"> As a corollary we obtain the rational
cohomology for G<sub>&Atilde;<sub>n</sub></sub> as well as the
cohomology of Br<sub>n+1</sub> with coefficients in the
(n+1)&ndash;dimensional representation obtained by Tong, Yang and Ma.
</p> <p class="noindent"> We stress the topological significance,
recalling some constructions of explicit finite CW&ndash;complexes for
orbit spaces of Artin groups. In case of groups of infinite type, we
indicate the (few) variations to be done with respect to the finite
type case. For affine groups, some of these orbit spaces are known to
be K(&pi;,1) spaces (in particular, for type &Atilde;<sub>n</sub>).
</p> <p class="noindent"> We point out that the above cohomology of
G<sub>B<sub>n</sub></sub> gives (as a module over the monodromy
operator) the rational cohomology of the fibre (analog to a Milnor
fibre) of the natural fibration of K(G<sub>B<sub>n</sub></sub>,1) onto
the 2&ndash;torus.  </p>
\end{htmlabstract}

\begin{abstract}
We consider two natural embeddings between Artin groups: the group
$G_{\tilde{A}_{n-1}}$ of type $\tilde{A}_{n-1}$ embeds into the
group  $G_{B_n}$ of type  $B_n$;
 $G_{B_n}$ in turn embeds into the classical braid group
$Br_{n+1}:=G_{A_n}$ of type $A_n$.
The cohomologies of these groups are related, by standard results, in
a precise way. By
using techniques developed in previous papers, we give precise
formulas (sketching the proofs) for  the cohomology of $G_{B_n}$ with
coefficients
over the module $\mathbb{Q}[q^{\pm1},t^{\pm1}],$ where the action is
$(-q)$--multiplication for the standard generators associated to the first
$n-1$ nodes of the Dynkin diagram, while is $(-t)$--multiplication for the
generator associated to the last node.

As a corollary we obtain the rational cohomology for
$G_{\tilde{A}_n}$ as well as the cohomology of $Br_{n+1}$ with
coefficients in the $(n+1)$--dimensional representation obtained by
Tong, Yang and Ma \cite{tong}.

We stress the topological significance, recalling some constructions
of explicit finite CW--complexes for orbit spaces of Artin groups. In
case of groups of infinite type, we indicate the (few) variations to
be done with respect to the finite type case (see Salvetti
\cite{S2}). For affine groups, some of these orbit spaces are known to
be $K(\pi,1)$ spaces (in particular, for type $\tilde{A}_n$).

We point out that the above cohomology of $G_{B_n}$ gives (as a module
over the monodromy operator) the
rational cohomology of the fibre (analog to a Milnor fibre)
of the natural fibration of $K(G_{B_n},1)$ onto the $2$--torus.
\end{abstract}

\begin{webabstract}
We consider two natural embeddings between Artin groups: the group
$G_{\tilde{A}_{n-1}}$ of type $\tilde{A}_{n-1}$ embeds into the group
$G_{B_n}$ of type $B_n$; $G_{B_n}$ in turn embeds into the classical
braid group $Br_{n+1}:=G_{A_n}$ of type $A_n$.  The cohomologies of
these groups are related, by standard results, in a precise way. By
using techniques developed in previous papers, we give precise
formulas (sketching the proofs) for the cohomology of $G_{B_n}$ with
coefficients over the module $\mathbb Q[q^{\pm 1},t^{\pm 1}],$ where
the action is $(-q)$--multiplication for the standard generators
associated to the first $n-1$ nodes of the Dynkin diagram, while is
$(-t)$--multiplication for the generator associated to the last node.

As a corollary we obtain the rational cohomology for $G_{\tilde{A}_n}$
as well as the cohomology of $Br_{n+1}$ with coefficients in the
$(n+1)$--dimensional representation obtained by Tong, Yang and Ma.

We stress the topological significance, recalling some constructions
of explicit finite CW--complexes for orbit spaces of Artin groups. In
case of groups of infinite type, we indicate the (few) variations to
be done with respect to the finite type case. For affine groups, some
of these orbit spaces are known to be $K(\pi,1)$ spaces (in
particular, for type $\tilde{A}_n$).

We point out that the above cohomology of $G_{B_n}$ gives (as a module
over the monodromy operator) the
rational cohomology of the fibre (analog to a Milnor fibre)
of the natural fibration of $K(G_{B_n},1)$ onto the $2$--torus.
\end{webabstract}

\begin{asciiabstract}
We consider two natural embeddings between Artin groups: the group
G_{tilde{A}_{n-1}} of type tilde{A}_{n-1} embeds into the group
1G_{B_n} of type B_n; G_{B_n} in turn embeds into the classical braid
group Br_{n+1}:=G_{A_n} of type A_n.  The cohomologies of these groups
are related, by standard results, in a precise way. By using
techniques developed in previous papers, we give precise formulas
(sketching the proofs) for the cohomology of G_{B_n} with coefficients
over the module Q[q^{+-1},t^{+-1}], where the action is
(-q)-multiplication for the standard generators associated to the
first n-1 nodes of the Dynkin diagram, while is (-t)-multiplication
for the generator associated to the last node.

As a corollary we obtain the rational cohomology for G_{tilde{A}_n}
as well as the cohomology of Br_{n+1} with coefficients in the
(n+1)-dimensional representation obtained by Tong, Yang and Ma.

We stress the topological significance, recalling some constructions
of explicit finite CW-complexes for orbit spaces of Artin groups. In
case of groups of infinite type, we indicate the (few) variations to
be done with respect to the finite type case.  For affine groups, some
of these orbit spaces are known to be K(pi,1) spaces (in particular,
for type tilde{A}_n).

We point out that the above cohomology of G_{B_n} gives (as a module
over the monodromy operator) the rational cohomology of the fibre
(analog to a Milnor fibre) of the natural fibration of K(G_{B_n},1)
onto the 2-torus.
\end{asciiabstract}

\maketitle

\section{Introduction} The cohomology of classical \emph{braid groups} with
trivial coefficients was computed in the seventies by F Cohen
\cite{coh}, and independently by A Va{\u\i}n{\v{s}}te{\u\i}n \cite{Ve}
(see also Arnol'd \cite{Ar}, Brieskorn and Saito \cite{Br,Br-Sa} and
Fuks \cite{Fu}). For Artin groups of type $C_n,$\ $D_n$ it was
computed by Gorjunov \cite{Go}, and for exceptional cases by Salvetti
\cite{S2} it was given as a $\Z$--module, while the ring structure was
computed by Landi \cite{La}. Other cohomologies with twisted
coefficients were later considered: an interesting case is over the
module of Laurent polynomials $\mathbb Q[q^{\pm 1}],$ which gives the
$\Q$--cohomology of the Milnor fibre of the naturally associated
bundle. For the case of classical braids many people made
computations, independently and using different methods (Frenkel
\cite{Fr}, Markaryan \cite{Ma}, Callegaro and Salvetti \cite{cas} and
De Concini, Procesi and Salvetti \cite{DPS}), while for cases $C_n,\
D_n$ see De Concini, Procesi, Salvetti and Stumbo \cite{DPS2} (here
the authors use the resolution coming from topological considerations
discovered by De Concini and Salvetti \cite{S2,DS}; an equivalent
resolution was independently discovered by using purely algebraic
methods by Squier \cite{Sq}). Over the integral Laurent polynomials
$\mathbb Z[q^{\pm 1}]$ not many computations exist: see D~Cohen and
Suciu $\cite{CS}$ for the exceptional cases and recently Callegaro
\cite{C2} for the case of braid groups, and De Concini, Salvetti and
Stumbo \cite{DSS} for the top cohomologies in all cases.

As regards Artin groups of non-finite type, some computations were
done by Salvetti and Stumbo \cite{SS} and Charney and Davis \cite{CD}.

In this paper we give a complete computation of the above
cohomologies over $\Q[q^{\pm 1}]$
for the Artin groups
$G_{\tilde{A}_n}$ of affine type $\tilde{A}_n.$
By using a natural embedding of $G_{\tilde{A}_{n-1}}$ into the Artin group
$G_{B_n}$ of type $B_n,$ we reduce to the equivalent computation
of the cohomology of $G_{B_n}$ over the module $\Q[q^{\pm 1},t^{\pm 1}],$ where the action is
$(-q)$--multiplication for the standard generators associated to the first
$n-1$ nodes of the Dynkin diagram, while is $(-t)$--multiplication for the
generator associated to the last node.

The proof uses  techniques similar to
\cite{DPS}: a natural
filtration of the complex given in \cite{S2}
and the associated spectral sequence. We sketch the argument here:
details will appear elsewhere.

As a corollary we derive the trivial $\Q$--cohomology of
$G_{\tilde{A}_{n-1}}.$

By using another natural inclusion, a map of $G_{\tilde{B}_{n}}$ into
the classical braid group $Br_{n+1}:=G_{A_n}$, we also find an
isomorphism with the cohomology of ${Br}_{n+1}$ over a certain
interesting representation, namely the irreducible
$(n+1)$--dimensional representation of $Br_{n+1}$ found by Tong, Yang
and Ma \cite{tong}, twisted by an abelian representation.

We also describe the cohomology of the
braid group over the irreducible representation in \cite{tong}.

The topological counterpart is given by some very explicit
constructions of finite CW--complexes which are retracts of the orbit
spaces associated to Artin groups. Following \cite{S2}, we show the
(few) variations to be done in case of groups of infinite type,
explicitly showing the affine case (see also \cite{CD} for a different
construction). From such constructions the standard presentation of
the fundamental group comes quite easily (see D{\~u}ng \cite{Ng}). We
also easily deduce a formula for the Euler characteristic of the orbit
space in the affine case. It is conjectured that such orbit spaces are
always $K(\pi,1)$ spaces; for the affine groups, this is known in case
$\tilde{A}_n,\ \tilde{C}_n$ (see Okonek \cite{Ok} and
Charney and Peifer \cite{CP}; see also \cite{CD} for
a different class of Artin groups of infinite type).

It is interesting to notice the geometrical meaning of the
two-parameters cohomology of $G_{{B}_{n}}:$  similar to the
one-parameter case, it gives the trivial cohomology of the
``Milnor fibre'' associated to the natural map of the orbit space
onto a two-dimensional torus.

The second author was partially supported by ISTI-CNR.  The third
author was partially supported (40\%) by M.U.R.S.T.
\section{Preliminary results}
In this section we briefly fix the notation and recall some
preliminary results.
\subsection{Coxeter groups and Artin  groups}
A \emph{Coxeter graph} is a finite undirected graph, whose edges
are labelled with integers $\geq 3$ or with the symbol $\infty$.

Let  $S$ be the vertex set of a Coxeter graph. For every pair of
vertices $s, t \in S$ ($s\neq t$) joined by an edge, define
$m(s,t)$ to be the label of the edge joining them. If $s, \, t$
are not joined by an edge, set by convention $m(s,t)=2$. Let also
$m(s,s)=1$ (see Bourbaki \cite{bour} and Humphreys \cite{hum}).

Two groups are associated to  a Coxeter graph: the \emph{Coxeter
group} $W$ defined by
$$
W=\langle s \in S \, |\, (st)^{m(s,t)}=1 \,\, \forall s,t \in S
\,\, \textrm{such that}\,\, m(s,t)\neq \infty\rangle
$$
and the \emph{Artin group} $G$ defined by (see Brieskorn and Saito 
\cite{Br-Sa} and Deligne \cite{del}):
$$
G=\langle s \in S \, | \,
\begin{underbrace}{stst\ldots}\end{underbrace}_{m(s,t)-\mathrm{terms}}=\begin{underbrace}{tsts\ldots}\end{underbrace}_{m(s,t)-\mathrm{terms}}
\,\, \forall s,t \in S \,\, \textrm{such that}\,\, m(s,t)\neq
\infty\rangle.
$$
Loosely speaking, $G$ is the group obtained by dropping the
relations $s^2=1$ ($s\in S$) in the presentation for $W$.

In this paper, we are primarily interested in Artin  groups
associated to Coxeter graphs of type $A_n$, $B_n$ and
$\tilde{A}_{n-1}$ (see \fullref{fig:dinkyn}).
\begin{figure}[ht!]
\begin{center}
\begin{picture}(0,0)%
\includegraphics{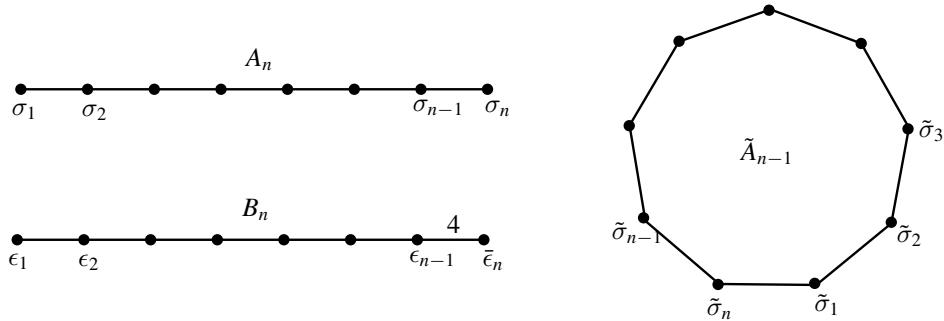}%
\end{picture}%
\setlength{\unitlength}{3947sp}%
\begin{picture}(5720,1937)(-4070,-4328)
\put(-1076,-3076){\makebox(0,0)[lb]{\smash{{\SetFigFont{9}{10.8}{\familydefault}{\mddefault}{\updefault}{\color[rgb]{0,0,0}$\sigma_n$}%
}}}}
\put(1510,-3891){\makebox(0,0)[lb]{\smash{{\SetFigFont{9}{10.8}{\familydefault}{\mddefault}{\updefault}{\color[rgb]{0,0,0}$\tilde{\sigma}_2$}%
}}}}
\put(1650,-3228){\makebox(0,0)[lb]{\smash{{\SetFigFont{9}{10.8}{\familydefault}{\mddefault}{\updefault}{\color[rgb]{0,0,0}$\tilde{\sigma}_3$}%
}}}}
\put(994,-4289){\makebox(0,0)[lb]{\smash{{\SetFigFont{9}{10.8}{\familydefault}{\mddefault}{\updefault}\adjustlabel<0pt,-2pt> {\color[rgb]{0,0,0}$\tilde{\sigma}_1$}%
}}}}
\put(534,-3376){\makebox(0,0)[lb]{\smash{{\SetFigFont{12}{14.4}{\familydefault}{\mddefault}{\updefault}{\color[rgb]{0,0,0}$\tilde{A}_{n-1}$}%
}}}}
\put(-4070,-4036){\makebox(0,0)[lb]{\smash{{\SetFigFont{9}{10.8}{\familydefault}{\mddefault}{\updefault}{\color[rgb]{0,0,0}$\epsilon_1$}%
}}}}
\put(-3630,-4043){\makebox(0,0)[lb]{\smash{{\SetFigFont{9}{10.8}{\familydefault}{\mddefault}{\updefault}{\color[rgb]{0,0,0}$\epsilon_2$}%
}}}}
\put(-1313,-3834){\makebox(0,0)[lb]{\smash{{\SetFigFont{9}{10.8}{\familydefault}{\mddefault}{\updefault}{\color[rgb]{0,0,0}$4$}%
}}}}
\put(-2598,-3729){\makebox(0,0)[lb]{\smash{{\SetFigFont{12}{14.4}{\familydefault}{\mddefault}{\updefault}{\color[rgb]{0,0,0}${B}_{n}$}%
}}}}
\put(-1552,-4015){\makebox(0,0)[lb]{\smash{{\SetFigFont{9}{10.8}{\familydefault}{\mddefault}{\updefault}{\color[rgb]{0,0,0}$\epsilon_{n-1}$}%
}}}}
\put(-4047,-3090){\makebox(0,0)[lb]{\smash{{\SetFigFont{9}{10.8}{\familydefault}{\mddefault}{\updefault}{\color[rgb]{0,0,0}$\sigma_1$}%
}}}}
\put(-3607,-3097){\makebox(0,0)[lb]{\smash{{\SetFigFont{9}{10.8}{\familydefault}{\mddefault}{\updefault}{\color[rgb]{0,0,0}$\sigma_2$}%
}}}}
\put(-1529,-3069){\makebox(0,0)[lb]{\smash{{\SetFigFont{9}{10.8}{\familydefault}{\mddefault}{\updefault}{\color[rgb]{0,0,0}$\sigma_{n-1}$}%
}}}}
\put(-2575,-2783){\makebox(0,0)[lb]{\smash{{\SetFigFont{12}{14.4}{\familydefault}{\mddefault}{\updefault}{\color[rgb]{0,0,0}${A}_{n}$}%
}}}}
\put(-278,-3870){\makebox(0,0)[lb]{\smash{{\SetFigFont{9}{10.8}{\familydefault}{\mddefault}{\updefault}{\color[rgb]{0,0,0}$\tilde{\sigma}_{n-1}$}%
}}}}
\put(317,-4297){\makebox(0,0)[lb]{\smash{{\SetFigFont{9}{10.8}{\familydefault}{\mddefault}{\updefault}{\color[rgb]{0,0,0}\adjustlabel<0pt,-3pt> {$\tilde{\sigma}_n$}}%
}}}}
\put(-1092,-4022){\makebox(0,0)[lb]{\smash{{\SetFigFont{9}{10.8}{\familydefault}{\mddefault}{\updefault}\adjustlabel<0pt,-1pt> {\color[rgb]{0,0,0}$\bar{\epsilon}_n$}%
}}}}
\end{picture}%
%
\end{center}
\caption{Coxeter graph of type  $A_n$, $B_n$ ($n\geq 2$) and
$\tilde{A}_{n-1}$ ($n\geq 3$). Labels equal to $3$, as usual, are
not shown. Moreover, to fix notation, every vertex is labelled
with the corresponding generator in the Artin 
group.}
\label{fig:dinkyn}
\end{figure}
%
\subsection{Inclusions of Artin groups}
%
Let $\mathrm{Br}_{n+1}:=G_{A_n}$ be the braid group on $n+1$
strands and $\mathrm{Br}_{n+1}^{n+1}<\mathrm{Br}_{n+1}$ be the
subgroup of braids fixing the $(n+1)$-st strand.
The group
$\mathrm{Br}_{n+1}^{n+1}$ is called the annular braid group, since
it can be regarded as the group of braids on $n$ strands on the
annulus (see \fullref{fig:cylinder}).
\begin{figure}[ht!]
 \begin{center}
\begin{picture}(0,0)%
\includegraphics{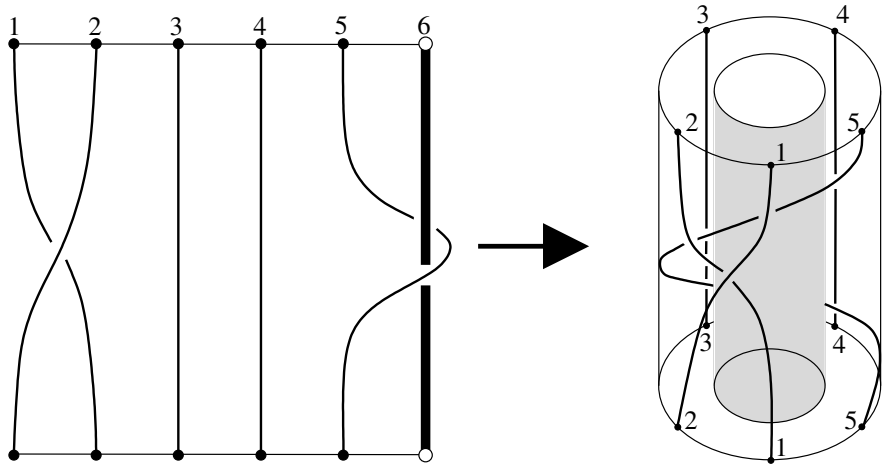}%
\end{picture}%
\setlength{\unitlength}{3947sp}%
\begin{picture}(5501,2851)(-2605,-4336)
\put(2588,-1561){\makebox(0,0)[lb]{\smash{\SetFigFont{8}{9.6}{\familydefault}{\mddefault}{\updefault}{\color[rgb]{0,0,0}$4$}%
}}}
\put(1708,-1561){\makebox(0,0)[lb]{\smash{\SetFigFont{8}{9.6}{\familydefault}{\mddefault}{\updefault}{\color[rgb]{0,0,0}$3$}%
}}}
\put(2598,-3602){\makebox(0,0)[lb]{\smash{\SetFigFont{8}{9.6}{\familydefault}{\mddefault}{\updefault}\adjustlabel<-2pt,-2pt> {\color[rgb]{0,0,0}$4$}%
}}}
\put(1728,-3591){\makebox(0,0)[lb]{\smash{\SetFigFont{8}{9.6}{\familydefault}{\mddefault}{\updefault}\adjustlabel<0pt,-2pt> {\color[rgb]{0,0,0}$3$}%
}}}
\put(-1566,-1635){\makebox(0,0)[lb]{\smash{\SetFigFont{8}{9.6}{\familydefault}{\mddefault}{\updefault}\adjustlabel<-2pt,0pt> {\color[rgb]{0,0,0}$3$}%
}}}
\put(-1041,-1636){\makebox(0,0)[lb]{\smash{\SetFigFont{8}{9.6}{\familydefault}{\mddefault}{\updefault}\adjustlabel<-2pt,0pt> {\color[rgb]{0,0,0}$4$}%
}}}
\put(1640,-2238){\makebox(0,0)[lb]{\smash{\SetFigFont{8}{9.6}{\familydefault}{\mddefault}{\updefault}{\color[rgb]{0,0,0}$2$}%
}}}
\put(2655,-2238){\makebox(0,0)[lb]{\smash{\SetFigFont{8}{9.6}{\familydefault}{\mddefault}{\updefault}{\color[rgb]{0,0,0}$5$}%
}}}
\put(2200,-2428){\makebox(0,0)[lb]{\smash{\SetFigFont{8}{9.6}{\familydefault}{\mddefault}{\updefault}{\color[rgb]{0,0,0}$1$}%
}}}
\put(1634,-4113){\makebox(0,0)[lb]{\smash{\SetFigFont{8}{9.6}{\familydefault}{\mddefault}{\updefault}{\color[rgb]{0,0,0}$2$}%
}}}
\put(2200,-4284){\makebox(0,0)[lb]{\smash{\SetFigFont{8}{9.6}{\familydefault}{\mddefault}{\updefault}{\color[rgb]{0,0,0}$1$}%
}}}
\put(2639,-4116){\makebox(0,0)[lb]{\smash{\SetFigFont{8}{9.6}{\familydefault}{\mddefault}{\updefault}{\color[rgb]{0,0,0}$5$}%
}}}
\put(-2583,-1637){\makebox(0,0)[lb]{\smash{\SetFigFont{8}{9.6}{\familydefault}{\mddefault}{\updefault}\adjustlabel<-2pt,0pt> {\color[rgb]{0,0,0}$1$}%
}}}
\put(-2077,-1640){\makebox(0,0)[lb]{\smash{\SetFigFont{8}{9.6}{\familydefault}{\mddefault}{\updefault}\adjustlabel<-2pt,0pt> {\color[rgb]{0,0,0}$2$}%
}}}
\put(-530,-1633){\makebox(0,0)[lb]{\smash{\SetFigFont{8}{9.6}{\familydefault}{\mddefault}{\updefault}\adjustlabel<-2pt,0pt> {\color[rgb]{0,0,0}$5$}%
}}}
\put(-10,-1628){\makebox(0,0)[lb]{\smash{\SetFigFont{8}{9.6}{\familydefault}{\mddefault}{\updefault}\adjustlabel<-2pt,0pt> {\color[rgb]{0,0,0}$6$}%
}}}
\end{picture}
\end{center}
\caption{A braid in $\mathrm{Br}_{6}^{6}$ represented as an
annular braid on $5$ strands.}
\label{fig:cylinder}
\end{figure}

It is well known that the annular braid group is indeed isomorphic to
the Artin group $G_{B_n}$ of type $B_n$.  For a proof of the following
Theorem see Lambropoulou \cite{lam} or Crisp \cite{crisp}.
\begin{teo}
Let $\sigma_1,
\ldots, \sigma_n$ be the standard generators for $G_{A_n}$ and
let $\epsilon_1, \ldots,$
$\epsilon_{n-1}, \bar{\epsilon}_n$ be the generators for
$G_{B_n}$.
The map
\begin{align*}
G_{B_n}&\to \mathrm{Br}_{n+1}^{n+1}<\mathrm{Br}_{n+1}\\
\epsilon_i& \mapsto  \sigma_i \quad \textrm{for $ 1 \leq i \leq n-1$}\\
\bar{\epsilon}_n &\mapsto \sigma_n^2
\end{align*}
 is an isomorphism.
\end{teo}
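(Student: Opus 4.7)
I would establish three things in order: (a) the assignment on generators extends to a well-defined homomorphism $\varphi\colon G_{B_n}\to\mathrm{Br}_{n+1}$; (b) the image lies in $\mathrm{Br}_{n+1}^{n+1}$; (c) $\varphi$ is bijective.

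For (a) one checks that the images satisfy the defining $B_n$ braid relations. The relations among $\sigma_1,\dots,\sigma_{n-1}$ are the $A_{n-1}$ braid relations and hold in $\mathrm{Br}_{n+1}$. The commutations $\epsilon_i\bar{\epsilon}_n=\bar{\epsilon}_n\epsilon_i$ for $i\le n-2$ become $\sigma_i\sigma_n^2=\sigma_n^2\sigma_i$, immediate from $\sigma_i\sigma_n=\sigma_n\sigma_i$. The only non-routine check is the $m=4$ relation
\[
\sigma_{n-1}\sigma_n^2\sigma_{n-1}\sigma_n^2=\sigma_n^2\sigma_{n-1}\sigma_n^2\sigma_{n-1},
\]
which reduces to $(\sigma_{n-1}\sigma_n)^3=(\sigma_n\sigma_{n-1})^3$ by repeated application of $\sigma_{n-1}\sigma_n\sigma_{n-1}=\sigma_n\sigma_{n-1}\sigma_n$. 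Claim (b) is immediate from the induced permutations: $\sigma_i$ with $i\le n-1$ fixes $n+1$, and $\sigma_n^2$ acts as the identity on $\{1,\dots,n+1\}$.

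For (c) I would argue topologically, using that $B_n$ is of finite type. By Deligne's theorem, $G_{B_n}\cong\pi_1(M(B_n)/W(B_n))$, and the squaring map $(z_1,\dots,z_n)\mapsto(z_1^2,\dots,z_n^2)$ identifies the orbit space with the unordered configuration space $\mathrm{UConf}_n(\mathbb{C}^*)$ of $n$ distinct non-zero points in $\mathbb{C}$. On the other hand, by the Galois correspondence $\mathrm{Br}_{n+1}^{n+1}$ is $\pi_1$ of the cover of $\mathrm{UConf}_{n+1}(\mathbb{C})=K(\mathrm{Br}_{n+1},1)$ associated to the stabilizer $S_n\subset S_{n+1}$ of the letter $n+1$. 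This cover is the space $X=\{(\{w_1,\dots,w_n\},z):w_i\ne w_j,\ z\ne w_i\}$, and the projection $(W,z)\mapsto z$ is a Fadell--Neuwirth fibration $X\to\mathbb{C}$ with fiber $\mathrm{UConf}_n(\mathbb{C}\setminus\{z\})$; contractibility of $\mathbb{C}$ then gives $\pi_1(X)\cong\pi_1(\mathrm{UConf}_n(\mathbb{C}^*))$. Both groups are thus canonically $\pi_1$ of the same space.

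The main obstacle will be matching the \emph{generators} under these two identifications. Concretely, I must choose a common basepoint and verify that $\bar{\epsilon}_n$---which under the squaring map lifts to a path of $z_n$ crossing the zero-hyperplane, and so descends to a loop of $w_n$ once around the origin of $\mathbb{C}^*$---corresponds in the annular picture to a full twist of the innermost moving strand around the fixed strand, namely $\sigma_n^2$; and likewise that each $\epsilon_i$ corresponds to $\sigma_i$. A purely algebraic alternative is to apply Reidemeister--Schreier to the quotient $\mathrm{Br}_{n+1}\to S_{n+1}$, extracting a presentation of the index-$(n+1)$ subgroup $\mathrm{Br}_{n+1}^{n+1}$ and comparing it term by term with the Artin presentation of $G_{B_n}$; this bypasses the $K(\pi,1)$ input but is combinatorially heavier.
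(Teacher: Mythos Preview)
The paper does not give its own proof of this theorem: it simply cites Lambropoulou and Crisp and moves on. So there is nothing to compare against directly.

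That said, your proposal is sound. Parts (a) and (b) are routine and correct; your reduction of the $m=4$ relation to $(\sigma_{n-1}\sigma_n)^3=(\sigma_n\sigma_{n-1})^3$ is clean, and this equality holds because the full twist is central in the parabolic $\langle\sigma_{n-1},\sigma_n\rangle\cong\mathrm{Br}_3$. Your topological argument for (c) is the standard one: the orbit space for $B_n$ is $\mathrm{UConf}_n(\mathbb{C}^*)$ via the squaring map, and the stabilizer subgroup $\mathrm{Br}_{n+1}^{n+1}$ is $\pi_1$ of the Fadell--Neuwirth fiber over a fixed point, which is again $\mathrm{UConf}_n(\mathbb{C}\setminus\{\mathrm{pt}\})$. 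You are right that the only genuine work left is the basepoint and generator matching, and your description of how $\bar\epsilon_n$ becomes a loop around the puncture (hence $\sigma_n^2$) is the correct picture. The Reidemeister--Schreier alternative you mention is essentially what Lambropoulou does; Crisp's argument is more structural, going through general results on injective maps between Artin groups. Your configuration-space route is closest in spirit to the remark the paper makes shortly afterward about the infinite cyclic cover $K(G_{\tilde A_{n-1}},1)\to K(G_{B_n},1)$.
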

 Using the suggestion given by the identification with the
annular braid group, a new interesting presentation for $G_{B_n}$
can be worked out.
Let $\tau=\bar{\epsilon}_n \epsilon_{n-1} \cdots \epsilon_2
\epsilon_1$. See \fullref{fig:tau_generator}. 
\begin{figure}[ht!]
 \begin{center}
\includegraphics[scale=.7]{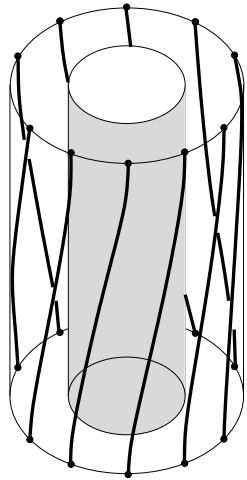}%
\end{center}
\caption{As an annular braid the element $\tau$ is obtained
turning the bottom annulus by a rotation of $2 \pi /n$.}
\label{fig:tau_generator}
\end{figure}

It is easy to verify that:
$$
\tau^{-1} \epsilon_i \tau= \epsilon_{i+1} \quad \textrm{ for $1
\leq i < n-1$}
$$
ie, conjugation by $\tau$ shifts forward the first $n-2$ standard
generators. By analogy, let  $\epsilon_n=\tau^{-1} \epsilon_{n-1}
\tau$.

We have  the following theorem: 
\begin{teo}[Kent and Peifer \cite{peifer}] \label{teo:peifer}
The group $G_{B_n}$ has presentation $\langle \mathcal{G} |
\mathcal{R} \rangle$ where
\begin{align*}
\mathcal{G}=& \{ \tau, \epsilon_1, \epsilon_2, \ldots ,\epsilon_{n} \}\\
\mathcal{R}=& \{ \epsilon_i \epsilon_j=\epsilon_j \epsilon_i \quad
\textrm{for $i\neq j-1,j+1$}\} \cup\\
 {}& \{\epsilon_i \epsilon_{i+1} \epsilon_i=
\epsilon_{i+1} \epsilon_{i}
\epsilon_{i+1}\}\cup \\
{}& \{\tau^{-1} \epsilon_i \tau= \epsilon_{i+1}  \}
\end{align*}
where are all indexes have to be taken modulo $n$. 
\end{teo}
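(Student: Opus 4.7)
The plan is to exhibit mutually inverse homomorphisms between $G_{B_n}$, given by its standard Artin presentation, and the group $H := \langle \mathcal{G}\mid\mathcal{R}\rangle$. The topological picture of $G_{B_n}$ as the annular braid group $\mathrm{Br}_{n+1}^{n+1}$, established in the previous theorem, serves as the guiding intuition: $\tau$ is literally a rotation of the annulus by $2\pi/n$, the $\epsilon_i$ for $i<n$ are the ``interior'' crossings, and $\epsilon_n = \tau^{-1}\epsilon_{n-1}\tau$ is the wrap-around crossing through the hole.

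First I construct $\varphi\colon H\to G_{B_n}$ by $\epsilon_i\mapsto\epsilon_i$ for $1\le i\le n-1$, $\tau\mapsto \bar\epsilon_n\epsilon_{n-1}\cdots\epsilon_1$, and $\epsilon_n\mapsto \tau^{-1}\epsilon_{n-1}\tau$. For this to be well-defined, every relation of $\mathcal R$ must hold in $G_{B_n}$. The shift identities $\tau^{-1}\epsilon_i\tau = \epsilon_{i+1}$ for $1\le i\le n-2$ reduce, via the $A$-type braid relation and the commutations $\epsilon_j\epsilon_k = \epsilon_k\epsilon_j$ for $|j-k|\ge 2$, to the classical identity $(\sigma_{n-1}\cdots\sigma_1)^{-1}\sigma_i(\sigma_{n-1}\cdots\sigma_1) = \sigma_{i+1}$ in $\mathrm{Br}_n$; the leading $\bar\epsilon_n$ in $\tau$ commutes past $\epsilon_i$ for $i\le n-2$ by the $B$-type commutations. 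The braid and commutation relations among $\epsilon_1,\ldots,\epsilon_{n-1}$ in $\mathcal R$ are inherited from the $A$-type subalgebra; the ones involving $\epsilon_n$ are obtained by conjugating the corresponding relations for $\epsilon_{n-1}$ by $\tau$, using the shift identities just established. In particular, the ``cyclic'' braid relation $\epsilon_n\epsilon_1\epsilon_n = \epsilon_1\epsilon_n\epsilon_1$ unfolds, after substituting $\epsilon_n = \tau^{-1}\epsilon_{n-1}\tau$, into the four-term $B$-relation $\bar\epsilon_n\epsilon_{n-1}\bar\epsilon_n\epsilon_{n-1} = \epsilon_{n-1}\bar\epsilon_n\epsilon_{n-1}\bar\epsilon_n$.

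In the other direction, define $\psi\colon G_{B_n}\to H$ by $\epsilon_i\mapsto\epsilon_i$ for $i\le n-1$ and $\bar\epsilon_n\mapsto \tau\,\epsilon_1^{-1}\epsilon_2^{-1}\cdots\epsilon_{n-1}^{-1}$. The $A_{n-1}$-relations among the first $n-1$ generators hold tautologically. There remain the two $B$-type relations: the commutations $\bar\epsilon_n\epsilon_j = \epsilon_j\bar\epsilon_n$ for $j\le n-2$, and the four-term relation at $\epsilon_{n-1}$. Repeated use of the shift identity of $\mathcal R$ in the form $\tau\,\epsilon_{i+1}^{-1} = \epsilon_i^{-1}\tau$ (indices mod $n$) allows one to push $\tau$ to one side of $\psi(\bar\epsilon_n)$ at will, after which both identities become consequences of the braid and commutation relations of the cyclic set $\{\epsilon_1,\ldots,\epsilon_n\}$ inside $H$.

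Finally, $\varphi\circ\psi = \mathrm{id}$ and $\psi\circ\varphi = \mathrm{id}$ may be checked directly on generators; the only nontautological composite is $\varphi(\psi(\bar\epsilon_n)) = \bar\epsilon_n\epsilon_{n-1}\cdots\epsilon_1\cdot\epsilon_1^{-1}\cdots\epsilon_{n-1}^{-1} = \bar\epsilon_n$, and $\psi(\varphi(\tau)) = \psi(\bar\epsilon_n)\,\epsilon_{n-1}\cdots\epsilon_1 = \tau$. The main technical obstacle is verifying the four-term $B$-relation under $\psi$: it requires unwinding $\tau$'s through long products of $\epsilon_i$'s and repeatedly invoking the cyclic shift, which is essentially an affine $\tilde A_{n-1}$-type computation. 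Geometrically all of this is transparent in the annular braid picture, where $\tau$ is a visible rotation and the cyclic symmetry of $\{\epsilon_1,\ldots,\epsilon_n\}$ around the annulus is manifest, making the annular braid model the cleanest route to organize the computations.
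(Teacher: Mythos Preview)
The paper does not give its own proof of this theorem: it is stated with attribution to Kent and Peifer, and the Remark immediately following records that their original argument is ``algebraic and based on Tietze moves,'' while a more concise topological proof, via an explicit infinite cyclic covering $K(G_{\tilde A_{n-1}},1)\to K(G_{B_n},1)$, is available in Allcock's work.

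Your proposal is exactly the Tietze-move strategy the paper attributes to Kent--Peifer: define maps in both directions on generators and verify the defining relations. The outline is sound, and the checkpoints you single out (the shift identities $\tau^{-1}\epsilon_i\tau=\epsilon_{i+1}$ in $G_{B_n}$, the cyclic braid relation under $\varphi$, and the four-term $B$-relation under $\psi$) are indeed the substantive computations; your annular-braid intuition is also the picture the paper invokes. So your approach is correct and coincides with the one the paper cites rather than carries out. The alternative the Remark points to replaces these generator-by-generator verifications with a single covering-space argument, which is what is meant by ``somewhat more concise.''
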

Letting $\tilde{\sigma}_1, \tilde{\sigma}_2, \ldots,
\tilde{\sigma}_{n}$ be the standard generators of the Artin 
group of type $\tilde{A}_{n-1}$, we have the following immediate
corollary:
\begin{cor}[Kent and Peifer \cite{peifer}, see also tom Dieck
    \cite{tdieck} and Allcock \cite{all}]\label{cor:peifer} The map
$$G_{\tilde{A}_{n-1}} \owns \tilde{\sigma}_i \mapsto \epsilon_i\in G_{B_n}$$
gives an isomorphism between $G_{\tilde{A}_{n-1}}$ and the
subgroup of $G_{B_n}$ generated by\break $\epsilon_1, \ldots,
\epsilon_n$. Moreover, we have a semidirect product decomposition
$G_{B_n}\cong G_{\tilde{A}_{n-1}} \rtimes \langle \tau
\rangle$.
\end{cor}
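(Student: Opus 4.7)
The plan is to read both statements directly off the Kent--Peifer presentation above by recognising it as the standard presentation of a semidirect product. The defining relations $\mathcal R$ fall naturally into two blocks: (i) relations among the $\epsilon_i$'s alone --- the commutations $\epsilon_i\epsilon_j=\epsilon_j\epsilon_i$ for $j\neq i\pm1\,(\mathrm{mod}\,n)$ and the length--three braid relations $\epsilon_i\epsilon_{i+1}\epsilon_i=\epsilon_{i+1}\epsilon_i\epsilon_{i+1}$, which are \emph{precisely} the defining relations of the Artin group $G_{\tilde{A}_{n-1}}$ attached to the cyclic Dynkin diagram of type $\tilde{A}_{n-1}$; and (ii) the conjugation relations $\tau^{-1}\epsilon_i\tau=\epsilon_{i+1\,(\mathrm{mod}\,n)}$, which prescribe an action of $\langle\tau\rangle$ on the subgroup generated by the $\epsilon_i$.

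Next, I would check that cyclic rotation of generators (in the appropriate direction) defines an automorphism $\alpha$ of $G_{\tilde{A}_{n-1}}$: this is immediate from the $\mathbb{Z}/n$ rotational symmetry of the cyclic diagram, since the defining relations are permuted among themselves and the opposite rotation is a two--sided inverse. Forming $G_{\tilde{A}_{n-1}}\rtimes_\alpha\mathbb{Z}$ and writing down its canonical presentation --- generators $\tilde\sigma_1,\ldots,\tilde\sigma_n$ together with a generator $t$ for $\mathbb{Z}$, the $\tilde A_{n-1}$ relations, and the relations $t^{-1}\tilde\sigma_i t=\alpha(\tilde\sigma_i)$ --- one verifies that this coincides term-for-term with the Kent--Peifer presentation under $\tilde\sigma_i\leftrightarrow\epsilon_i$, $t\leftrightarrow\tau$. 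Comparison of presentations (von Dyck in both directions) therefore yields a canonical isomorphism
\[
\Psi\colon G_{\tilde{A}_{n-1}}\rtimes_\alpha\mathbb{Z}\ \xrightarrow{\sim}\ G_{B_n}.
\]
Both parts of the corollary then fall out at once: the restriction of $\Psi$ to the normal factor is the asserted map $\tilde\sigma_i\mapsto\epsilon_i$, automatically injective with image $\langle\epsilon_1,\ldots,\epsilon_n\rangle$; and $\Psi$ itself realises the decomposition $G_{B_n}\cong G_{\tilde{A}_{n-1}}\rtimes\langle\tau\rangle$.

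The one point requiring a small independent argument --- the step I would pin down first --- is that $\langle\tau\rangle\leq G_{B_n}$ is genuinely infinite cyclic, so that the abstract $\mathbb{Z}$--factor does not collapse upon embedding into $G_{B_n}$. For this I would use the character $\chi\colon G_{B_n}\to\mathbb{Z}$ sending $\epsilon_i\mapsto 0$ for $i<n$ and $\bar{\epsilon}_n\mapsto 1$, well--defined because every defining relation of $G_{B_n}$ (in the $B_n$ generators) is trivially satisfied in the abelian target. From $\tau=\bar\epsilon_n\epsilon_{n-1}\cdots\epsilon_1$ one gets $\chi(\tau)=1$, and the identity $\epsilon_n=\tau^{-1}\epsilon_{n-1}\tau$ forces $\chi(\epsilon_n)=0$; hence $\tau$ has infinite order and $\langle\tau\rangle\cap\langle\epsilon_1,\ldots,\epsilon_n\rangle=\{1\}$, completing the verification.
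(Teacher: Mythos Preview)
Your argument is correct and is exactly the unpacking of what the paper means by calling this an ``immediate corollary'' of the Kent--Peifer presentation: recognise the presentation $\langle\mathcal G\mid\mathcal R\rangle$ as that of a semidirect product $G_{\tilde A_{n-1}}\rtimes_\alpha\mathbb Z$, where $\alpha$ is the cyclic rotation automorphism of the affine diagram, and read off both assertions. The paper gives no further detail beyond the word ``immediate'' (the subsequent remark about Tietze moves and the infinite cyclic covering concerns the proof of Theorem~\ref{teo:peifer} itself, not the corollary).

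One small comment: your final paragraph, while correct, is not actually needed. Once you have established the isomorphism $\Psi\colon G_{\tilde A_{n-1}}\rtimes_\alpha\mathbb Z\xrightarrow{\sim}G_{B_n}$ by identifying presentations, the image of the $\mathbb Z$--factor is automatically infinite cyclic and meets the image of the normal factor trivially --- these are properties of the abstract semidirect product, transported via $\Psi$. There is no possibility of ``collapse'': any collapse would already have to be visible in the presentation, and it is not. The character $\chi$ you construct is a pleasant direct check (and in fact is just the projection $G_{B_n}\to G_{B_n}/\langle\!\langle\epsilon_1,\dots,\epsilon_n\rangle\!\rangle\cong\mathbb Z$ coming from the semidirect product structure), but it is logically redundant once the presentation comparison is in hand.
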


We have thus a `curious' inclusion of the Artin group of infinite
type $\tilde{A}_{n-1}$ into the Artin group of finite type
${B_n}$.

\begin{rem}
The proof of \fullref{teo:peifer} presented in \cite{peifer}
is algebraic and based on Tietze moves; a somewhat more
coincise proof is obtained by standard topological
constructions. Indeed, one can exhibit an explicit infinite cyclic
covering $K(G_{\tilde{A}_{n-1}},1)\to K(G_{B_{n}},1)$ (see \cite{all}).
\end{rem}

\subsection{Local coefficients and induced representations}

The previous inclusions allow to relate the homology of the
involved groups by means of Shapiro's lemma (see for instance
Brown \cite{brown}), of which we explore some consequences.

Let $M:=\mathbb{Q}[q^{\pm 1}].$  We indicate by $M_q$ the
$G_{\tilde{A}_{n-1}}$--module
where the action of the standard generators is $(-q)$--multiplication.

\begin{prop}\label{prop:sha1}
We have
$$
H_*(G_{\tilde{A}_{n-1}}, M_q)\cong H_*(G_{B_{n}}, M[t^{\pm 1}]_{q,t})
$$
$$
H^*(G_{\tilde{A}_{n-1}}, M_q)\cong H^*(G_{B_{n}}, M[[t^{\pm 1}]]_{q,t})
$$
where the action of $G_{B_n}$ on $M[t^{\pm 1}]_{q,t}$ (and on
$M[[t^{\pm 1}]]_{q,t}$) is given by $(-q)$--multiplication for the
generators $\epsilon_1, \ldots, \epsilon_{n-1}$ and
$(-t)$--multiplication for the last generator $\bar{\epsilon}_n$.
\end{prop}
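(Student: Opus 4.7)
The plan is to apply Shapiro's lemma to the inclusion $G_{\tilde A_{n-1}}\hookrightarrow G_{B_n}$ of Corollary~\ref{cor:peifer}, reducing the statement to an identification of the induced and coinduced modules using the semidirect product decomposition $G_{B_n}\cong G_{\tilde A_{n-1}}\rtimes\langle\tau\rangle$. Setting $H:=G_{\tilde A_{n-1}}$ and $G:=G_{B_n}$, Shapiro's lemma provides isomorphisms
$$
H_\ast(H,M_q)\cong H_\ast(G,\mathbb Z G\otimes_{\mathbb Z H}M_q),\qquad H^\ast(H,M_q)\cong H^\ast(G,\mathrm{Hom}_{\mathbb Z H}(\mathbb Z G,M_q)),
$$
so it suffices to produce $G$-module isomorphisms $\mathbb Z G\otimes_{\mathbb Z H}M_q\cong M[t^{\pm1}]_{q,t}$ and $\mathrm{Hom}_{\mathbb Z H}(\mathbb Z G,M_q)\cong M[[t^{\pm1}]]_{q,t}$.

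The semidirect product structure supplies $\{\tau^k:k\in\mathbb Z\}$ as a complete set of right $H$-coset representatives, so $\mathbb Z G\cong\bigoplus_k\tau^k\mathbb Z H$ as a right $\mathbb Z H$-module, and hence as abelian groups $\mathrm{Ind}_H^G M_q\cong\bigoplus_k(\tau^k\otimes M)$ and $\mathrm{CoInd}_H^G M_q\cong\prod_k M$. To identify these with the claimed modules I would use Frobenius reciprocity. Let $\chi:G\to M[t^{\pm1}]^\times$ be the abelian character defining $M[t^{\pm1}]_{q,t}$, so that $\chi(\epsilon_i)=-q$ for $i<n$ and $\chi(\bar\epsilon_n)=-t$. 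Because $\chi$ takes values in a commutative ring, the relation $\epsilon_n=\tau^{-1}\epsilon_{n-1}\tau$ of Theorem~\ref{teo:peifer} forces $\chi(\epsilon_j)=-q$ for every $j=1,\dots,n$, so the restriction of $M[t^{\pm1}]_{q,t}$ to $H$ contains $M_q$ as the $H$-submodule $M\cdot t^0$. Frobenius reciprocity then extends the $H$-equivariant inclusion $M_q\hookrightarrow M[t^{\pm1}]_{q,t}$ to the $G$-equivariant map
$$
\tilde\varphi:\mathrm{Ind}_H^G M_q\longrightarrow M[t^{\pm1}]_{q,t},\qquad \tau^k\otimes m\longmapsto \chi(\tau)^k\,m.
$$
From $\tau=\bar\epsilon_n\epsilon_{n-1}\cdots\epsilon_1$ one computes $\chi(\tau)=-t\,(-q)^{n-1}$, a unit in $M[t^{\pm1}]$ which is homogeneous of $t$-degree $1$; consequently $\tilde\varphi$ sends the summand $\tau^k\otimes M$ isomorphically onto $M\cdot t^k$ and is therefore a $G$-module isomorphism globally. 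The cohomology statement follows by the dual argument: $\prod_k M$ is identified with the formal bi-infinite Laurent module $M[[t^{\pm1}]]_{q,t}$ through the same rescaling by powers of $\chi(\tau)$, compatibly with the $G$-action.

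The only real technical point is the bookkeeping of $\chi$: one must compute $\chi(\tau)=-t(-q)^{n-1}$ explicitly from the factorisation of $\tau$, verify via the conjugation relations of Theorem~\ref{teo:peifer} that $\chi(\epsilon_j)=-q$ for every generator $\epsilon_1,\ldots,\epsilon_n$ of $H$ (so that the restriction of $M[t^{\pm1}]_{q,t}$ to $H$ is genuinely a direct sum of copies of $M_q$ graded by the $t$-degree), and observe that $\chi(\tau)$ is a unit in $M[t^{\pm1}]$, which is automatic since $q$ is inverted in $M$. Once these verifications are in place, Shapiro's lemma together with Frobenius reciprocity forces the stated isomorphisms; no further computation is needed.
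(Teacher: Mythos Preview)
Your proof is correct and follows essentially the same approach as the paper: both apply Shapiro's lemma to the inclusion of Corollary~\ref{cor:peifer}, use the semidirect product decomposition to take $\{\tau^k\}$ as coset representatives, and then write down the explicit isomorphism $\tau^\alpha\otimes q^m\mapsto (-1)^{n\alpha}t^\alpha q^{(n-1)\alpha+m}$ (your formula $\chi(\tau)^k m$ with $\chi(\tau)=(-t)(-q)^{n-1}$ is exactly this). The only difference is packaging: you invoke Frobenius reciprocity and the abelian character $\chi$ to explain why the map is $G$-equivariant, whereas the paper simply asserts the formula and leaves the verification to the reader.
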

\begin{proof}
Applying Shapiro's lemma to the inclusion
$\tilde{A}_{n-1}<G_{B_{n}}$, one obtains:
$$
H_*(G_{\tilde{A}_{n-1}}, M_q)\cong H_*(G_{B_{n}},
\mathrm{Ind}_{G_{\tilde{A}_{n-1}}}^{G_{B_n}}M_q)
$$
$$
H^*(G_{\tilde{A}_{n-1}}, M_q)\cong H^*(G_{B_{n}},
\mathrm{Coind}_{G_{\tilde{A}_{n-1}}}^{G_{B_{n}}}M_q).
$$
By \fullref{cor:peifer}, any element of
$\mathrm{Ind}_{G_{\tilde{A}_{n-1}}}^{G_{B_n}}M_q:=\mathbb{Z}[G_{B_n}]
\otimes_{G_{\tilde{A}_{n-1}}} M_q$ can be represented as a sum of
elements of the form $\tau^{\alpha} \otimes q^m$. Now, we have an
isomorphism of $\mathbb{Z}[G_{B_n}]$--modules
\begin{align*}
\mathbb{Z}[G_{B_n}] \otimes_{G_{\tilde{A}_{n-1}}} M_q &\to
M[t^{\pm 1}]_{q,t}
\end{align*}
defined by sending $ \tau^\alpha \otimes q^m\mapsto (-1)^{n\alpha}
t^{\alpha}q^{(n-1)\alpha +m}$ and the result follows.

In cohomology we have similarly:
$$\mathrm{Coind}_{G_{\tilde{A}_{n-1}}}^{G_{B_n}}M_q:=\mathrm{Hom}_{G_{\tilde{A}_{n-1}}}(\mathbb{Z}[G_{B_n}],M_q)
 \cong M[[t^{\pm 1}]]_{q,t}.\proved$$
\end{proof}

It is interesting to note what happens inducing again via the
inclusion $G_{B_n}<G_{A_n}$.

Let $V=\bigoplus_{i=1}^{n+1} \mathbb{Q}[u^\pmu]e_i$ be an
$(n+1)$--dimensional free $\mathbb{Q}[u^\pmu]$ module.
\begin{df}\label{df:tong}
The Tong--Yang--Ma representation \cite{tong} is the representation
$$\rho:G_{A_n} \to  \mathrm{Gl}_{\mathbb{Q}[u^\pmu]}(V) $$
defined w.r.t. the basis $e_1,  \ldots, e_{n+1}$ by:
$$\rho (\sigma_i)=\left( \begin{array}{ccccc}
I_{i-1}&&&\\
&0&1&\\
&u&0&\\
&&&I_{n-i}
\end{array} \right)$$
where $I_j$ denote the $j$--dimensional identity matrix and all
other entries are zero.
\end{df}

We refer to Sysoeva \cite{sysoeva} for a discussion of the
relevance of Tong--Yang--Ma representation in braid group
representation theory. We recall that the image of the
pure braid group in the Tong--Yang--Ma representation is abelian;
hence this representation factors through the \emph{extended Coxeter group}
presented by Tits \cite{tits}.

\begin{prop}\label{prop:sha2}
We have
$$ H_*(G_{B_n}, M[t^{\pm 1}]_{q,t})\cong H_*(G_{A_n}, M_q \otimes
V)
$$
$$
H^*(G_{B_n}, M[t^{\pm 1}]_{q,t})\cong H^*(G_{A_n}, M_q \otimes V)
$$
where the action of $G_{A_n}$ on $M_q$ is defined sending the
standard generators to $(-q)$--multiplication.
\end{prop}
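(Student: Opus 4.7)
The plan is to invoke Shapiro's lemma for the finite-index inclusion $G_{B_n}<G_{A_n}$, whose image in $S_{n+1}$ is the stabilizer of the last point and therefore has index $n+1$. Since the index is finite, induced and coinduced modules coincide, so in both homology and cohomology the statement reduces to exhibiting an isomorphism of $\mathbb{Z}[G_{A_n}]$--modules
$$\mathrm{Ind}_{G_{B_n}}^{G_{A_n}} M[t^{\pm 1}]_{q,t} \;\cong\; M_q \otimes V,$$
under a ring identification $\mathbb{Q}[q^{\pm 1},u^{\pm 1}]\cong\mathbb{Q}[q^{\pm 1},t^{\pm 1}]$ by a monomial substitution; the computation below will force $u=-t/q^2$.

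I would pick the explicit left coset representatives $\rho_j := \sigma_j\sigma_{j+1}\cdots\sigma_n$ for $j=1,\ldots,n$ and $\rho_{n+1}=1$, which as permutations send strand $n+1$ to position $j$. For each standard generator $\sigma_i$, a short braid calculation writes $\sigma_i\rho_j = \rho_k h$ with $h\in G_{B_n}$ in three cases, mirroring the action of $\sigma_i$ on the position of strand $n+1$: if $i\notin\{j-1,j\}$, then $\sigma_i\rho_j=\rho_j h$ with $h$ a conjugate of some standard generator $\epsilon_\ell$; if $i=j-1$, then directly $\sigma_{j-1}\rho_j=\rho_{j-1}$ and $h=1$; and if $i=j$, iterating the braid identity $\sigma_{k+1}^{-1}\sigma_k^2\sigma_{k+1}=\sigma_k\sigma_{k+1}^2\sigma_k^{-1}$ yields $\sigma_j\rho_j=\rho_{j+1}\cdot w\,\bar\epsilon_n\,w^{-1}$ with $w=\sigma_j\cdots\sigma_{n-1}\in G_{B_n}$. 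Since the $G_{B_n}$--action on $M[t^{\pm 1}]_{q,t}$ factors through the abelianization $\mathbb{Z}^2=\langle\epsilon,\bar\epsilon\rangle$, in the first two cases $h$ acts by $-q$ (or by $1$), and in the last by $-t$.

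The final step is to set $e_j=\lambda_j(\rho_j\otimes 1)$ with $\lambda_j/\lambda_{j+1}=-1/q$; the three cases above then translate exactly into the action of $G_{A_n}$ on $M_q\otimes V$, namely $\sigma_i e_k=-q\, e_k$ for $k\notin\{i,i+1\}$, $\sigma_i e_{i+1}=-q\, e_i$, and $\sigma_i e_i=-qu\, e_{i+1}$, once one sets $u:=-t/q^2$. The main obstacle is the asymmetry built into the Tong--Yang--Ma matrix $\left(\begin{smallmatrix}0&1\\u&0\end{smallmatrix}\right)$, where the weight $u$ appears in only one off-diagonal entry. On the induced side, this is accounted for by the fact that shifting strand $n+1$ down one position costs nothing in $G_{B_n}$, while shifting it up forces a half-turn producing a conjugate of $\bar\epsilon_n$; the key technical point is that this conjugate really lies in the abelianization class of $\bar\epsilon_n$ (and not of any $\epsilon_\ell$), which is immediate from the explicit form $w\bar\epsilon_n w^{-1}$ with $w\in\langle\epsilon_1,\ldots,\epsilon_{n-1}\rangle$.
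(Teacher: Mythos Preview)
Your argument is correct and follows essentially the same strategy as the paper: apply Shapiro's lemma to the index-$(n+1)$ inclusion $G_{B_n}<G_{A_n}$, compute the induced module via explicit coset representatives, and identify it with $M_q\otimes V$ after a diagonal change of basis and the substitution $u=-q^{-2}t$. The only cosmetic difference is the choice of representatives: the paper takes the half-twists $\alpha_i=(\sigma_i\cdots\sigma_{n-1})\sigma_n(\sigma_i\cdots\sigma_{n-1})^{-1}$, $\alpha_n=\sigma_n$, $\alpha_{n+1}=e$, and then conjugates by $\mathrm{Diag}(1,\ldots,1,-q^{-1})$, whereas you take the shifts $\rho_j=\sigma_j\cdots\sigma_n$ and rescale by $\lambda_j/\lambda_{j+1}=-1/q$; both routes land on the same $u=-t/q^2$.
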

\begin{proof}[Sketch of proof]
For the statement in homology, by Shapiro's lemma, it is enough to
show that ${\mathrm{Ind}}_{G_{B_n}}^{G_{A_n}} M[t^{\pm 1 }]_{q,t}
\cong M_q \otimes
V$.
Note that $[G_{A_n}:G_{B_n}]=n+1$ and let choose as coset
 representatives for $G_{A_n}/G_{B_n}$ the
 elements $$\alpha_i= (\sigma_i \sigma_{i+1} \cdots \sigma_{n-1})
 \sigma_n (\sigma_{i} \sigma_{i+1} \cdots
 \sigma_{n-1})^{-1}$$for $1 \leq i \leq n-1$, $\alpha_n=\sigma_n$,
 $\alpha_{n+1}=e$. 
 
Then, by definition of induced representation,
 $$\mathrm{Ind}_{G_{B_n}}^{G_{A_n}} M[t^{\pm 1
 }]_{q,t}=\bigoplus_{i=1}^{n+1} M[t^{\pm 1}]e_i$$ with the following action.
 For an element  $x\in G_{A_n}$, write $x \alpha_k= \alpha_{k'} x'$
 with $x' \in G_{B_n}$.
 Then $x$ acts on an element $r\cdot e_k \in \bigoplus_{i=1}^{n+1}
 M[t^{\pm 1}]e_i$
 as $x(r\cdot e_k)=(x'r) \cdot e_{k'}$.

 After some easy computations, one can write the representation in
 the following matrix form:
$$ \sigma_i\mapsto \left( \begin{array}{ccccc}
-qI_{i-1}&&&\\
&0&-q&\\
&q^{-1} t&0&\\
&&&-qI_{n-i}
\end{array} \right)$$
for $1\leq i \leq n-1$, whereas
$$ \sigma_n\mapsto \left( \begin{array}{ccccc}
-qI_{n-1}&&\\
&0&1&\\
& -t&0
\end{array} \right).$$
Conjugating by $U=\mathrm{Diag}(1,1, \ldots, 1,-q^{-1})$ and
setting $u=-q^{-2}t$, one obtains the desired result.

Finally, since $[G_{A_n}:G_{B_n}]=n+1<\infty$, the induced and
coinduced representation are isomorphic; so the analogous
statement in cohomology holds. 
\end{proof}

\begin{rem}
Specializing $q$ to $1$, we have in particular that homology of
$G_{\tilde{A}_{n-1}}$ with trivial coefficients is isomorphic to
homology of $G_{A_n}$ with coefficients in the Tong--Yang--Ma
representation.
\end{rem}

By means of Propositions \ref{prop:sha1} and \ref{prop:sha2}, in
the following all cohomology computations will be performed in
$G_{B_n}$ using the double-weight coefficient system. We conclude
remarking that cohomology computations can be even reduced from
the ring of Laurent series to the ring of Laurent polynomials by
the following result about degree shift, obtained by Callegaro \cite{C}
in a slightly weaker form, but which is possible to extend to our case
with little effort.

\begin{prop}[Degree shift]\label{prop:shift}
$$
H^*(G_{B_n}, M[[t^{\pm 1 }]]_{q,t})\cong H^{*+1}(G_{B_n}, M[t^{\pm
1 }]_{q,t}).
$$

\end{prop}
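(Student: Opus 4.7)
The strategy is to construct a short exact sequence of $G_{B_n}$-modules
\begin{equation*}
0\to M[t^{\pm 1}]_{q,t}\xrightarrow{\Delta} M((t))_{q,t}\oplus M((t^{-1}))_{q,t}\xrightarrow{\Sigma} M[[t^{\pm 1}]]_{q,t}\to 0,
\end{equation*}
with $M((t)):=M[[t]][t^{-1}]$, $M((t^{-1})):=M[[t^{-1}]][t]$, $\Delta(p)=(p,-p)$, and $\Sigma(a,b)=a+b$. Exactness is direct: $M((t))\cap M((t^{-1}))=M[t^{\pm 1}]$ inside $M[[t^{\pm 1}]]$, while any bilateral formal Laurent series splits canonically as its non-negative-power part (in $M((t))$) plus its negative-power part (in $M((t^{-1}))$). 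Once the middle term is known to be acyclic for $G_{B_n}$, the associated long exact sequence will collapse to the desired connecting isomorphism $H^k(G_{B_n},M[[t^{\pm 1}]]_{q,t})\xrightarrow{\sim} H^{k+1}(G_{B_n},M[t^{\pm 1}]_{q,t})$.

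The $G_{B_n}$-action on $M[t^{\pm 1}]_{q,t}$ factors through a one-dimensional character $\chi\colon G_{B_n}\to M[t^{\pm 1}]^\times$. Hence, using Salvetti's finite free $\mathbb{Z}[G_{B_n}]$-resolution $S_\bullet$ of $\mathbb{Z}$, the cochain complex $\mathrm{Hom}_{\mathbb{Z}[G_{B_n}]}(S_\bullet,N)$ can be identified with $C^\bullet\otimes_{M[t^{\pm 1}]}N$ for a fixed finite complex $C^\bullet$ of free $M[t^{\pm 1}]$-modules and every $M[t^{\pm 1}]$-module $N$ carrying the $\chi$-twisted $G_{B_n}$-action. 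Flatness of $M((t))$ and $M((t^{-1}))$ over $M[t^{\pm 1}]$ (each obtained by base change from the flat completion $M[t]\to M[[t]]$, resp.\ $M[t^{-1}]\to M[[t^{-1}]]$) and the universal coefficient theorem for chain complexes then give
\begin{equation*}
H^*(G_{B_n},M((t^{\pm 1}))_{q,t})\cong H^*(G_{B_n},M[t^{\pm 1}]_{q,t})\otimes_{M[t^{\pm 1}]}M((t^{\pm 1})).
\end{equation*}

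Acyclicity of $M((t))_{q,t}$ and $M((t^{-1}))_{q,t}$ thus reduces to showing that $H^*(G_{B_n},M[t^{\pm 1}]_{q,t})$ is annihilated by $(1+t)$, which is a unit in both $M((t))$ (since $t$ lies in the maximal ideal of $M[[t]]$) and in $M((t^{-1}))$ (via the explicit inverse $(1+t)^{-1}=t^{-1}(1+t^{-1})^{-1}$). This $(1+t)$-torsion claim is the main technical point: it follows from the explicit description of $H^*(G_{B_n},M[t^{\pm 1}]_{q,t})$ worked out earlier in the paper, or alternatively can be established in the spirit of Callegaro \cite{C} by exploiting the inner-automorphism action of $\bar{\epsilon}_n$ on group cohomology combined with the fact that $\bar{\epsilon}_n$ acts on coefficients as multiplication by $-t$. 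The main obstacle is carrying out this torsion check in the two-parameter setting; however, since the $q$- and $t$-directions correspond to independent generators of $H_1(G_{B_n})$, the extension from the one-parameter case treated in \cite{C} is essentially formal (and in fact the same argument also produces $(1+q)$-torsion).
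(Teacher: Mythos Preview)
The paper does not actually supply a proof of this proposition: it merely cites Callegaro's one-variable result \cite{C} and asserts that the extension to two parameters takes ``little effort.'' Your short-exact-sequence strategy
\[
0\to M[t^{\pm 1}]_{q,t}\to M((t))_{q,t}\oplus M((t^{-1}))_{q,t}\to M[[t^{\pm 1}]]_{q,t}\to 0
\]
together with flat base change is the natural way to make that extension precise, and it is almost certainly what the authors intend.

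There is, however, a genuine error in your acyclicity step. The claim that $H^*(G_{B_n},M[t^{\pm1}]_{q,t})$ is $(1+t)$-torsion is false. Already for $n=2$, \fullref{t:cohomqt} produces the summand $\{1\}_{1}=R/(1+qt)$ in $H^2$, and $1+t\notin(1+qt)$ (specialize $t=-q^{-1}$: the image of $1+t$ is $1-q^{-1}\neq 0$). More generally the summands $\{d\}_k$ with $k\neq 0$ are never killed by $1+t$. Your inner-automorphism heuristic does not rescue this: the standard fact is that the \emph{pair} $(c_g,m_g)$ acts trivially on $H^*(G,N)$, which for a rank-one module only says that $c_g^*$ acts as multiplication by $\chi(g)^{-1}$ --- it does \emph{not} give $c_g^*=\mathrm{id}$ independently, so one cannot conclude that multiplication by $-t=\chi(\bar\epsilon_n)$ is the identity.

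The repair is easy. Either observe directly from \fullref{t:cohomqt} (whose proof is independent of this proposition) that every summand $\{d\}_k$ is annihilated by $1+q^kt$, an element with constant term $1$ in $M[[t]]$ and hence a unit in both $M((t))$ and $M((t^{-1}))$; this suffices for the tensor product to vanish. Or, avoiding the forward reference entirely, use the genuinely central element $z=\Delta\in G_{B_n}$: since $c_z=\mathrm{id}$, the cohomology is honestly annihilated by $1-\chi(z)$, a polynomial of the form $1\pm q^a t^b$ with $b>0$, which is again a unit in $M((t))$ and in $M((t^{-1}))$. With either correction your argument goes through.
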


\subsection{${\mathbf{(q,t)}}$--weighted Poincar\'e series for ${\mathbf{B_n}}$}

For future use in cohomology computations, we are interested in a
$(q,t)$--analog of the usual Poincar\'e series for $B_n$. This result
and similar ones are studied in Reiner \cite{reiner}, to which we
refer for details. We also use classical results from \cite{bour,hum}
without further reference.

Consider the Coxeter group $W$ of type $B_n$ with its standard
generating
reflections $s_1, s_2, \ldots, s_n$.

For $w \in W$, let $n(w)$ be the number of times $s_n$ appears in
a reduced expression for $w$. By standard facts, $n(w)$ is
well-defined.

 Let also $\W(q,t)=\sum_{w \in W}
q^{\ell(w)-n(w)} t^{n(w)}$ be the $(q,t)$--weighted Poincar\'e
series, where $\ell$ is the length function.

We recall some notation. We write $\ph_m(q)$ for the $m$-th cyclotomic polynomial
in the variable $q$ and we define the $q$--analog of the number $m$ by
the polynomial $$[m ]_q :=
1 + q + \cdots q^{m-1} = \frac{q^m -1}{q-1}.$$ It is easy to see that $[m ] =
\prod_{i \mid m, i \neq 1} \ph_i(q)$. Moreover we define the $q$--factorial
$[m]_q!$ as the product
$$\prod_{i=1}^{m}[i]_q$$ and the $q$--analog of the binomial $\binom{m}{i}$
as the polynomial $$\qbin{m}{i}_{q}: = \frac{[m]_q!}{[i]_q! [m-i]_q!}.$$
We can also define the $(q,t)$--analog of an even number
$$[2m]_{q,t} := [m]_q (1+tq^{m-1})$$
and of the double factorial
$$
[2m]_{q,t}!! := \prod_{i=1}^m [2i]_{q,t}\ =\ [ m ]_q!
\prod_{i=0}^{m-1} (1+tq^i).
$$
Finally, we define the polynomial
\begin{equation}\label{e:binqt}
\qbin{m}{i}_{q,t}': = \frac{[2m]_{q,t}!!}{[2i]_{q,t}!! [m-i]_q!} \ =
\qbin{m}{i}_{q}\prod_{j=i}^{m-1}(1+tq^j).
\end{equation}

\begin{prop}{\rm\cite{reiner}}\label{p:qtpoincare}\qua
$$\W(q,t)=
[2n]_{q,t}!!.$$
\end{prop}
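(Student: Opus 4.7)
The plan is to induct on $n$, using the parabolic subgroup of $W$ of type $B_{n-1}$. For $n=1$, $W = \{e, s_1\}$ gives $\W(q,t) = 1 + t = [2]_{q,t}$ directly.

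For the inductive step, set $W' = \langle s_2, \ldots, s_n\rangle$; its Coxeter graph is the $B_{n-1}$ Dynkin diagram, since the label--$4$ edge remains between $s_{n-1}$ and $s_n$. Every $w \in W$ admits a unique factorization $w = cw'$ with $w' \in W'$ and $c$ the minimum--length representative of the coset $wW'$, satisfying $\ell(w) = \ell(c) + \ell(w')$. Concatenating reduced expressions for $c$ and $w'$ produces a reduced expression for $w$, so also $n(w) = n(c) + n(w')$. The function $n(\cdot)$ is well--defined because $s_n$ is not conjugate to any other simple reflection: the only edge at $s_n$ carries the even label $4$, and the associated braid move $s_ns_{n-1}s_ns_{n-1}=s_{n-1}s_ns_{n-1}s_n$ has the same number of $s_n$'s on both sides. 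Hence
$$
\W(q,t) \;=\; \Bigl(\sum_{c} q^{\ell(c) - n(c)} t^{n(c)}\Bigr)\, W'(q,t),
$$
where $c$ ranges over the $2n$ minimum--length coset representatives of $W/W'$.

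To enumerate them, realize $W$ as signed permutations of $\{\pm 1, \ldots, \pm n\}$, with $s_i = (i,i{+}1)(-i,-i{-}1)$ for $i < n$ and $s_n = (n,-n)$. Then $W'$ is precisely the stabilizer of $+1$, so the cosets are indexed by the value $c(1) \in \{\pm 1, \ldots, \pm n\}$, matching the index $2n$. A direct calculation shows that the minimum--length representatives are
$$
c_{j} = s_{j-1}s_{j-2}\cdots s_{1},\qquad c_{-j} = s_{j}s_{j+1}\cdots s_{n-1}\, s_{n}\, s_{n-1}\cdots s_{1} \qquad (1 \le j \le n),
$$
with invariants $(\ell(c_j),n(c_j)) = (j-1,0)$ and $(\ell(c_{-j}),n(c_{-j})) = (2n-j,1)$. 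Summing gives
$$
\sum_{j=1}^n q^{j-1} \;+\; t\sum_{j=1}^n q^{2n-j-1} \;=\; [n]_q \;+\; tq^{n-1}[n]_q \;=\; [n]_q(1+tq^{n-1}) \;=\; [2n]_{q,t},
$$
and combining with the inductive value $W'(q,t) = [2(n-1)]_{q,t}!!$ yields $\W(q,t) = [2n]_{q,t}!!$.

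The only non--routine step is the identification of the minimum--length coset representatives together with their $\ell$-- and $n$--values; the rest is a standard parabolic factorization of a Poincar\'e--type series, provided one selects the type--$B_{n-1}$ (rather than the type--$A_{n-1}$) parabolic subgroup so that $[W:W']=2n$ matches the orbit of $+1$ under the signed--permutation action.
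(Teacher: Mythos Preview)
Your argument is correct. The factorization $w=cw'$ with additive length, the well-definedness of $n(\cdot)$ via Matsumoto's theorem (the only braid move touching $s_n$ is $s_{n-1}s_ns_{n-1}s_n=s_ns_{n-1}s_ns_{n-1}$, which preserves the $s_n$--count), and the explicit list of $2n$ minimal representatives are all sound; the length and $n$--values you assign are the right ones, and the sum telescopes to $[2n]_{q,t}$ as claimed.

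The route, however, differs from the paper's. The paper factors through the parabolic $W_I=\langle s_1,\ldots,s_{n-1}\rangle$ of type $A_{n-1}$, of index $2^n$, obtaining
\[
\W(q,t)\;=\;\Bigl(\sum_{w'\in W^I}q^{\ell(w')-n(w')}t^{n(w')}\Bigr)\cdot [n]_q!,
\]
and then computes the first factor to be $\prod_{i=0}^{n-1}(1+tq^i)$ by an explicit enumeration of the $2^n$ minimal coset representatives. You instead factor through the $B_{n-1}$ parabolic $\langle s_2,\ldots,s_n\rangle$ and induct. Your choice trades a single large computation ($2^n$ representatives) for a sequence of small ones ($2n$ representatives at each inductive step), and the signed-permutation model makes those $2n$ elements transparent as the orbit of $+1$. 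The paper's choice, on the other hand, yields in one stroke the structurally suggestive factorization $\W(q,t)=[n]_q!\cdot\prod_{i=0}^{n-1}(1+tq^i)$, separating the symmetric-group contribution from the sign contribution; this is the form actually used later when identifying the coboundary coefficients $\qbin{m}{i}_{q,t}'$. Both approaches are standard and equally valid here.
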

\begin{proof} Consider the parabolic subgroup $W_I$ associated to the subset of
reflections $I=\{s_1, \ldots, s_{n-1}\}$. Notice that $W_I$ is
isomorphic to the symmetric group on $n$ letters $A_{n-1}$ and
that it has index $2^n$ in $B_n$. Let $W^I$ be the set of minimal
coset representatives for $W/W_I$. Then, by multiplicative
properties on reduced expressions:
\begin{align}
\W(q,t)=&\sum_{w \in W} q^{\ell(w)-n(w)} t^{n(w)}\nonumber\\
=& \Big( \sum_{w' \in W^I} q^{\ell(w')-n(w')} t^{n(w')}\Big) \cdot
\Big( \sum_{w'' \in W_I} q^{\ell(w'')-n(w'')} t^{n(w'')}\Big).
\label{formula:1}
\end{align}
Clearly, for elements $w''\in W_I$, we have $n(w'')=0$; so the
second factor in \eqref{formula:1} reduces to the well-known
Poincar\'e series for $A_{n-1}$:
$$
 \sum_{w'' \in W_I} q^{\ell(w'')-n(w'')} t^{n(w'')}=[n]_q!.
$$
The remaining of the proof uses an explicit computation of minimal
coset representatives in $W^I.$
\end{proof}

\section{The cohomology of $G_{B_n}$}\label{Section3}

In this section we will compute the cohomology groups
$H^*(G_{B_n}, R_{q,t})$, where $R_{q,t}$ is the local system over
the ring of Laurent polynomials $R = \Q [q^\pmu, t^\pmu]$ and the
action is $(-q)$--multiplication for the standard generators
associated to the first $n-1$ nodes of the Dynkin diagram, while
is $(-t)$--multiplication for the generator associated to the last
node.

In order to state our result we need to define,
for $m\geq 2,$  $R$--modules
$$
\{m \}_i = R/(\ph_m(q), q^it+1).
$$
For $m = 1$ we set:
$$
\{ 1 \}_i = R/(q^it +1).
$$
Notice that the modules $\{ m \}_i$ are pairwise non isomorphic as
$R$--modules. $\{m \}_i$ and
$\{m'\}_{i'}$ are isomorphic as $\Q[q^\pmu]$--modules if and only if $m
= m'$ and are isomorphic as
$\Q[t^\pmu]$--modules if and only if $\phi(m) = \phi(m')$ and
$\frac{m}{(m,i)} = \frac{m'}{(m,i')}$.

Our main result is the following:

\begin{teo} \label{t:cohomqt}
$$
H^i(G_{B_n}, R_{q,t})= \left\{
\begin{array}{ll}
\bigoplus_{d \mid n, 0 \leq k \leq d-2}
\{ d \}_k \oplus \{ 1 \}_{n-1} & \mbox{ if } i = n\\
\bigoplus_{
d \mid n, 0 \leq k \leq d-2, d \leq \frac{n}{j+1} }
\{ d \}_k & \mbox{ if }i = n -2j \\
\bigoplus_{d \nmid n,
d \leq \frac{n}{j+1} }
\{ d \}_{n-1} & \mbox{ if }i = n -2j -1
\end{array}
\right.
$$

\end{teo}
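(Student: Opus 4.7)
The strategy will follow \cite{DPS}, now in the bigraded setting. The starting point is the algebraic Salvetti complex from \cite{S2}: a finite free $R$--complex $C^*$ computing $H^*(G_{B_n}, R_{q,t})$, whose term in cohomological degree $k$ has rank $\binom{n}{k}$, with basis $e_T$ indexed by subsets $T\subseteq\{\epsilon_1,\dots,\epsilon_{n-1},\bar{\epsilon}_n\}$ of size $k$ (every such $T$ generates a finite parabolic, since $B_n$ is of finite type). The coboundary entries are polynomial multiples of ratios $W_T(q,t)/W_{T\setminus\{s\}}(q,t)$, which by the parabolic version of \fullref{p:qtpoincare} factor as products of $q$--numbers $[m]_q$ and double-weight factors $(1+tq^j)$.

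Next, I would filter $C^*$ by the presence of the distinguished last generator $\bar{\epsilon}_n$: let $F^1 C^*$ be the subcomplex spanned by $\{e_T : \bar{\epsilon}_n \in T\}$, with quotient $C^*_A$ isomorphic to the Salvetti complex for the type--$A_{n-1}$ parabolic with coefficients in $M_q$ (extended trivially in $t$). The induced two-row spectral sequence has $E_1$ page built from two (shifted) copies of the type--$A_{n-1}$ cohomology with $M_q$--coefficients, already known from \cite{DPS}. The essential $t$--information is concentrated in the connecting differential $d_1$, which on each piece is given by multiplication by a product $\prod_j(1+tq^j)$ distinguishing the $B$--Poincar\'e series from the $A$--one.

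The third step is to analyze $d_1$ cyclotomic-piece by cyclotomic-piece. The type--$A$ input is a direct sum of modules $\Q[q^{\pm 1}]/\ph_d(q)$ for various $d$, and $d_1$ acts on each such piece as multiplication by the image of the appropriate product of factors $(1+tq^j)$ modulo $\ph_d(q)$; the kernel and cokernel are then precisely sums of modules of the form $\{d\}_k = R/(\ph_d(q), q^k t+1)$. The dichotomy $d\mid n$ versus $d\nmid n$ in the statement arises because $\ph_d(q)\mid [n]_q$ exactly when $d\mid n$, which controls whether the corresponding $q$--torsion class was already present in the type--$A_{n-1}$ input or appears only via the connecting map, while the admissible range of $k$ reflects the multiplicities of $\ph_d(q)$ in the intermediate $[m]_q$ for $m\leq n$.

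The main obstacle will be the combinatorial bookkeeping needed to check that $d_2$ and all higher differentials vanish (so that $E_2=E_\infty$) and to pin down the precise bounds $0\leq k\leq d-2$ and $d\leq n/(j+1)$. I expect these finer constraints to follow from a bigraded version of the degree-shift identity \fullref{prop:shift}, together with the explicit factorization \eqref{e:binqt} of $\qbin{m}{i}_{q,t}'$, which governs the multiplicative structure of ratios of parabolic Poincar\'e series. Collecting the surviving $E_\infty$ terms in each bidegree then yields the closed form claimed in \fullref{t:cohomqt}.
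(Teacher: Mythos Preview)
Your overall strategy---filter the Salvetti complex, identify the graded pieces with type--$A$ complexes whose cohomology is known from \cite{DPS}, then track the $(1+tq^j)$ factors through the spectral sequence---is exactly the paper's. But the specific filtration you choose is too coarse, and this is a genuine gap.

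You take the two-step filtration $F^1C^* = \operatorname{span}\{e_T : \bar{\epsilon}_n \in T\}$. The quotient $C^*/F^1C^*$ is indeed the $A_{n-1}$ complex, as you say. However, $F^1C^*$ itself is \emph{not} a shifted copy of the $A_{n-1}$ complex: writing $T = T' \cup \{\bar{\epsilon}_n\}$ and applying $\delta$, the coefficient of $e_{T\cup\{\epsilon_j\}}$ is the $(q,t)$--binomial $\qbin{m}{i}_{q,t}'$ whenever the connected component of $\epsilon_j$ in the Dynkin diagram of $T\cup\{\epsilon_j\}$ reaches $\bar{\epsilon}_n$. So the internal differential of $F^1C^*$ already carries $t$--dependence, and your $E_1$ page is not ``two copies of type--$A_{n-1}$ cohomology''. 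The claim that all the $t$--information sits in $d_1$ is then unfounded, and there is no reduction.

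The paper avoids this by using a finer filtration: $F^sC_n$ is spanned by those $T$ whose characteristic string ends in at least $s$ consecutive $1$'s. The point is that a string ending in \emph{exactly} $s$ ones has a $0$ in position $n-s$, which disconnects the trailing $B_s$--block from the rest; hence the induced differential on $F^sC_n/F^{s+1}C_n$ involves only pure $q$--binomials, and one gets $F^sC_n/F^{s+1}C_n \cong \bar{C}_{n-s-1}[s]$, the complex for $A_{n-s-1}$. This yields an $E_1$ page with \emph{many} columns, each a known $A$--type cohomology (plus two copies of $R$ in the last two columns), and the $(q,t)$--binomials $\qbin{m}{i}_{q,t}'$ enter only as spectral-sequence differentials between columns. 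The rest of your outline---analyzing these differentials cyclotomic piece by piece and reading off the $\{d\}_k$---then goes through essentially as you describe.
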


To perform our computation we will use a method quite similar to
\cite{DPS}, namely the complex
introduced in \cite{S2}, and the spectral sequence induced by a natural
filtration.

Recall from \cite{S2} that the complex that compute the cohomology
of $G_{B_n}$ over $R_{q,t}$
is given as follows:
$$
C_n^* = \bigoplus_{\Gamma \sst I_n} R.\Gamma
$$
where $I_n$ denote the set $\{1, \ldots, n\}$ and the graduation is given
by $\mid \Gamma \mid$.

The set $I_n$ corresponds to the set of nodes of the Dynkin diagram of
$B_n$ and
in particular the last element, $n$, corresponds to the last node.

It is useful to consider also the analog complex $\CA_n^*$ for the cohomology
of $G_{A_n}$
on the local system $R_{q,t}$. In this case the action associated to a
standard generator is always the $(-q)$--multiplication and so the
complex $\CA_n^*$ and its cohomology are free as $\Q[t^\pm]$--modules. The
complex $\CA_n^*$ is isomorphic to $C_n^*$ as an $R$--module.
In both complexes the coboundary map is
\begin{equation} \label{e:bordo}
\delta (q,t) (\Gamma) = \sum_{j \in I_n \setminus \Gamma}
(-1)^{\sigma(j,\Gamma)}
\frac{W_{\Gamma \cup \{ j \}}(q,t)} {W_{ \Gamma}(q,t)} (\Gamma \cup \{j\})
\end{equation}
\ni where $\sigma(j, \Gamma)$ is the number of elements of
$\Gamma$ that are less than $j$ in the natural ordering. In the
case $A_n$,  $ W_\Gamma (q,t)$ is the Poincar\'e polynomial of the
parabolic subgroup $W_\Gamma \sst A_n$ generated by the elements
in the set $\Gamma$, with weight $q$ for each standard generator,
while in the case $B_n$ $ W_\Gamma (q,t)$ is the Poincar\'e
polynomial of the parabolic subgroup $W_\Gamma \sst B_n$ generated
by the elements in the set $\Gamma$, with weight $q$ for the first
$n-1$ generators and $t$ for the last generator.

Using \fullref{p:qtpoincare} we can give an explicit
computation of the coefficients appearing in \ref{e:bordo}. For
any $ \Gamma \sst I_n$, let $\overline{\Gamma}$ be the subgraph of
the Dynkin diagram $B_n$ which is spanned by $\Gamma$. Recall that
if $\overline{\Gamma}$ is a connected component of the Dynkin
diagram of $B_n$ without the last element, then $$W_{\Gamma}(q, t)
= [ m+1 ]_q!,$$ where $m = \mid \Gamma \mid$. If
$\overline{\Gamma}$ is connected and contains the last element of
$B_n$, then by \mbox{\fullref{p:qtpoincare}}
$$W_{\Gamma}(q, t) = [2m]_{q,t}!!, $$ where $m = \mid \Gamma \mid$.

If $\overline{\Gamma}$ is the union of
several connected components of the Dynkin diagram, $\overline{\Gamma}
= \overline{\Gamma}_1
\cup \cdots \cup \overline{\Gamma}_k$, then $W_\Gamma(q,t)$
is the product $$\prod_{i=1}^k W_{\Gamma_i}(q, t)$$ of the factors
corresponding
to the different components.

If $j \notin \Gamma$ we can write $\overline{\Gamma}(j)$ for the connected
component of $\overline{\Gamma \cup \{j\}}$ containing $j$. Suppose that $m =
\mid \Gamma(j) \mid$ and $i$ is the number of elements in $\Gamma(j)$
greater than $j$. Then, if $n \in \Gamma(j)$ we have
$$
\frac{W_{\Gamma \cup \{ j \}}(q,t)} {W_{ \Gamma}(q,t)} = \qbin{m}{i}_{q,t}'
$$
and
$$
\frac{W_{\Gamma \cup \{ j \}}(q,t)} {W_{ \Gamma}(q,t)} =
\qbin{m+1}{i+1}_{q}
$$
otherwise.

\begin{proof} [Sketch of proof of \fullref{t:cohomqt}]
It is convenient to represent generators $\Gamma \sst I_n$ by their
characteristic functions $I_n \to \{0,1\}$ so, simply by strings of
$0$'s and $1$'s of length $n$.

We define a decreasing filtration $F$ on the complex $(C^*_n,\delta)$:
$F^sC_n$ is the
subcomplex generated by the strings of type $A1^s$ (ending with a
string of $s$ $1$s) and we have the inclusions
$$
C_n = F^0C_n \tss F^1C_n \tss \cdots \tss F^nC_n = R.1^n \tss F^{n+1}C_n = 0.
$$
We have the following isomorphism of complexes:
\begin{equation} \label{e:shift}
(F^sC_n/F^{s+1}C_n) \simeq \CA_{n-s-1} [s]
\end{equation}
where $\CA_{n-s-1}$ is the complex for $G_{A_{n-s-1}}$ and the notation $[s]$
means that the degree is shifted by $s$.

The proof uses the spectral sequence $E_*$ associated to the
filtration $F$. The equality \eqref{e:shift} tells us how the
$E_1$ term of the spectral sequence looks like. In fact for $0
\leq s \leq n-2$ we have
\begin{equation} \label{e:sseq1}
E_1^{s,r} = H^{r}(G_{A_{n-s-1}}, R_{q,t}) = H^{r}(G_{A_{n-s-1}},
\Q[q^\pmu]_q)[t^\pmu]
\end{equation}
since the $t$--action is trivial. For $s = n-1$ and $s = n$ the only
non trivial elements in the spectral sequence are
\begin{equation} \label{e:sseq2}
E_1^{n-1,0} = E_1^{n,0} = R.
\end{equation}
If we write $\{m \}[t^\pmu]$ for the module $R/(\ph_m(q))$, then  the
$E_1$--term of
the spectral sequence has a module $\{m \}[t^\pmu]$ in position $(s,r)$
if and only if one of the following condition is satisfied:

a)$m \mid n-s-1$ and $ r = n-s-2 \frac{n-s-1}{m}$;

b)$m \mid n-s$ and $ r=n-s+1-2(\frac{n-s}{m})$.

\ni We know the generators of these modules from \cite{DPS}.
Moreover (see formula \ref{e:sseq2}) we have modules $R$ in
position $(n-1,0)$ and $(n,0)$. The differentials are expressed in
terms of modified binomials defined in formula \ref{e:binqt}. Then
the proof is obtained by a subtle analysis of such differentials.
\end{proof}

\section{Some consequences}
\fullref{t:cohomqt} gives the cohomology of $G_{B_n}$ as well as
(using \fullref{prop:sha1}) that of $G_{\tilde{A}_{n-1}}$ if we
consider it only as $\Q[q^{\pm 1}]$--module. As regards the rational
cohomology, \fullref{prop:sha1} translates into the following:

\begin{prop}\label{prop:shaq}
We have
$$
H_*(G_{\tilde{A}_{n-1}}, \Q)\cong H_*(G_{B_{n}}, \Q[t^{\pm 1}])
$$
$$
H^*(G_{\tilde{A}_{n-1}}, \Q)\cong H^*(G_{B_{n}}, \Q[[t^{\pm 1}]])
$$
where the action of $G_{B_n}$ on $\Q[t^{\pm 1}]$ (and on
$\Q[[t^{\pm 1}]]$) is trivial for the generators $\epsilon_1,
\ldots, \epsilon_{n-1}$ and $(-t)$--multiplication for the last
generator $\overline{\epsilon}_n$.
\end{prop}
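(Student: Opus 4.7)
The plan is to adapt the proof of Proposition~\ref{prop:sha1} verbatim, replacing the $G_{\tilde{A}_{n-1}}$--module $M_q=\Q[q^{\pm1}]_q$ by $\Q$ endowed with the trivial action. Applying Shapiro's lemma to the inclusion provided by \fullref{cor:peifer}, the two assertions reduce to constructing $\Z[G_{B_n}]$--module isomorphisms
\begin{equation*}
\mathrm{Ind}_{G_{\tilde{A}_{n-1}}}^{G_{B_n}}\Q\;\cong\;\Q[t^{\pm1}],\qquad \mathrm{Coind}_{G_{\tilde{A}_{n-1}}}^{G_{B_n}}\Q\;\cong\;\Q[[t^{\pm1}]],
\end{equation*}
in which the right-hand sides carry the action described in the statement.

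By the semidirect decomposition $G_{B_n}\cong G_{\tilde{A}_{n-1}}\rtimes\langle\tau\rangle$ of \fullref{cor:peifer}, the powers $\{\tau^\alpha\}_{\alpha\in\Z}$ form a complete set of coset representatives. Consequently every element of $\mathrm{Ind}_{G_{\tilde{A}_{n-1}}}^{G_{B_n}}\Q$ is a finite sum of elementary tensors $\tau^\alpha\otimes 1$, while each $\phi\in\mathrm{Coind}_{G_{\tilde{A}_{n-1}}}^{G_{B_n}}\Q$ is determined by its values $\phi(\tau^\alpha)\in\Q$. I would take as candidate isomorphisms
\begin{equation*}
\tau^\alpha\otimes 1\;\longmapsto\;(-1)^\alpha t^\alpha\qquad\text{and}\qquad \phi\;\longmapsto\;\sum_{\alpha\in\Z}\phi(\tau^\alpha)\,(-1)^\alpha t^\alpha.
\end{equation*}

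Equivariance is the heart of the argument, and is verified exactly as in \fullref{prop:sha1}. Using the relations $\tau^{-1}\epsilon_i\tau=\epsilon_{i+1}$ of \fullref{teo:peifer} (indices modulo $n$, with $\epsilon_n:=\tau^{-1}\epsilon_{n-1}\tau\in G_{\tilde{A}_{n-1}}$), one iterates to $\epsilon_i\tau^\alpha=\tau^\alpha\epsilon_{(i+\alpha)\bmod n}$. Since every $\epsilon_j$ ($1\le j\le n$) acts trivially on $\Q$, this gives $\epsilon_i\cdot(\tau^\alpha\otimes 1)=\tau^\alpha\otimes 1$ for $1\le i\le n-1$, so these generators act trivially on $\Q[t^{\pm1}]$ and $\Q[[t^{\pm1}]]$. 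For $\overline{\epsilon}_n$, writing $\overline{\epsilon}_n=\tau\,\epsilon_1^{-1}\cdots\epsilon_{n-1}^{-1}$ and commuting the $\epsilon_j^{-1}$ through $\tau^\alpha$ yields $\overline{\epsilon}_n\cdot(\tau^\alpha\otimes 1)=\tau^{\alpha+1}\otimes 1$, which maps to $(-1)^{\alpha+1}t^{\alpha+1}=(-t)\cdot(-1)^\alpha t^\alpha$, exhibiting $\overline{\epsilon}_n$ as $(-t)$--multiplication. The main (mild) obstacle is purely sign bookkeeping: in \fullref{prop:sha1} a compensating factor $(-1)^{n-1}$ was produced by the $(-q)$--action of the $\epsilon_j^{-1}$ on $q^m$ and absorbed into the exponent $n\alpha$ of the isomorphism, whereas with trivial coefficients the sign $(-1)^\alpha$ must be inserted by hand into the definition.
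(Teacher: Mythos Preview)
Your proposal is correct and follows exactly the approach the paper intends: the paper gives no separate proof of this proposition but simply says that ``\fullref{prop:sha1} translates into the following'', meaning one reruns the Shapiro argument with the trivial module $\Q$ in place of $M_q$. Your careful tracking of the sign---observing that the $(n-1)$ factors $\epsilon_j^{-1}$ now act trivially, so the compensating sign becomes $(-1)^\alpha$ rather than $(-1)^{n\alpha}$---is the only new point, and you handle it correctly.
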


The cohomology of $G_{B_n}$ over the module $\Q[t^{\pm 1}],$ with
action as in \fullref{prop:shaq}, is computed by the complex
$C_n^*$ of \fullref{Section3} where we specialize $q$ to $-1.$ So we use
similar filtration and associated spectral sequence. Recall that the
$\Q$ cohomology of the braid group is of rank $1$ in dimension $0,\
1,$ and vanishes elsewhere.  Then by using a formula analog to
\eqref{e:sseq1} we get
$$
\begin{array}{cccl} \label{e:ssQ}
E_1^{s,r} & = & \Q[t^{\pmu}]\ & \text{if}\quad 0\leq s\leq n,\ r=0\quad \text
{or} \quad 0\leq s\leq n-2,\ r=1\\
          &&& \\
          & = & 0  & \text{otherwise}
\end{array}$$
Next from formula \eqref{e:bordo} it follows
$$
d_1^{s,r}\ =\ \{[s+1]_q\ (1+q^s t)\}_{\{q=-1\}},\ r=0,1
$$
so $d_1^{s,r}=0$ for odd $s$ while $d_1^{s,r}=1+t$ for even $s.$ It
follows that in $E_2$ the odd columns are obtained from the same
columns of $E_1$ dividing by $1+t.$ The even columns vanish, except
for $n$ even it remains
$$E_2^{n-2,1}=E_2^{n,0}=\Q[t^{\pmu}].$$
The only possible non vanishing boundaries are
$$d_2^{s,1}:\ E_2^{s,1}\ \to\ E_2^{s+2,0}$$
and these are of the form
$$
\qbin{s+2}{s}_{[q=-1],t}'.
$$
Up to an invertible, the latter holds $(1+t)(1-t).$ Then $d_2$
vanishes except for $d_2^{n-2,1}$ in case $n$ even. It follows
that:

\begin{teo}\label{teo:ratio}\def\strutt{\vrule width 0pt depth 7pt} One has
$$\begin{array}{cclc}
H^k(G_{B_n},\Q[t^{\pmu}]) & = & \Q[t^{\pmu}]/(1+t) & \quad \ 1\leq
k\leq n-1 \strutt\\
H^n(G_{B_n},\Q[t^{\pmu]}) & = & \Q[t^{\pmu}]/(1+t) & \quad \text{for odd $n$}\strutt\\
H^n(G_{B_n},\Q[t^{\pmu}]) & = & \Q[t^{\pmu}]/(1-t^2) & \quad \text{for
even $n$.}
\end{array}$$
\end{teo}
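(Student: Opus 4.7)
The plan is to specialize the spectral sequence argument used for \fullref{t:cohomqt} to the value $q=-1$. The complex $C_n^*$ at $q=-1$ computes $H^*(G_{B_n}, \Q[t^\pmu])$ with the required action, and the decreasing filtration $F$ by the number of trailing $1$'s carries over verbatim, yielding a spectral sequence converging to the desired cohomology. The key simplification at $q=-1$ is that the rational cohomology of the braid group is supported in degrees $0$ and $1$, which makes the $E_1$--page a thin two-row strip and forces the spectral sequence to almost collapse.

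More precisely, by \eqref{e:shift} each graded piece $E_1^{s,*}$ computes $H^*(G_{A_{n-s-1}}, \Q)\otimes\Q[t^\pmu]$ up to a degree shift, so $E_1^{s,0}=\Q[t^\pmu]$ for $0\leq s\leq n$, $E_1^{s,1}=\Q[t^\pmu]$ for $0\leq s\leq n-2$, and zero elsewhere. The boundary $d_1$ from \eqref{e:bordo} specializes to the scalar $[s+1]_q(1+q^s t)|_{q=-1}$, which equals $0$ for odd $s$ and $1+t$ for even $s$. Passing to $E_2$, odd columns become $\Q[t^\pmu]/(1+t)$ while even columns die, except that $E_2^{n,0}$ and (for $n$ even) $E_2^{n-2,1}$ remain free $\Q[t^\pmu]$ because no $d_1$ leaves them inside the two nonzero rows.

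The next step is to analyze $d_2$. By \eqref{e:binqt}, the relevant coefficient $\qbin{s+2}{s}'_{q,t}$ at $q=-1$ equals, up to a nonzero rational multiple, multiplication by $(1+t)(1-t)=1-t^2$. Since this is zero modulo $1+t$, $d_2$ vanishes on every $\Q[t^\pmu]/(1+t)$ entry; the only potentially nontrivial $d_2$ is $d_2^{n-2,1}\colon E_2^{n-2,1}\to E_2^{n,0}$ when $n$ is even, which is then injective with cokernel $\Q[t^\pmu]/(1-t^2)$. Higher differentials $d_r$, $r\geq 3$, vanish by bidegree considerations (their targets lie in rows of negative index), and since every $E_\infty$--entry is a cyclic $\Q[t^\pmu]$--module there are no extension problems. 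Collecting contributions by total degree yields the stated formulas.

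The main obstacle I anticipate is the bookkeeping at the right edge of the spectral sequence: verifying that the exceptional $\Q[t^\pmu]/(1-t^2)$ contribution appears precisely in $H^n$ for $n$ even (and nowhere else) requires careful tracking of which columns retain a free $\Q[t^\pmu]$ summand after $d_1$ and how the parity of $n$ interacts with the filtration index in the two exceptional edge positions. The rest is a direct specialization of the $A$--type computation in \cite{DPS} pulled through the filtration.
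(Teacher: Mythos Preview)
Your proposal is correct and follows essentially the same approach as the paper: specialize the filtration spectral sequence of \fullref{t:cohomqt} at $q=-1$, identify the $E_1$--page via the rational cohomology of braid groups, compute $d_1$ as $[s+1]_q(1+q^st)|_{q=-1}$ and $d_2$ via $\qbin{s+2}{s}'_{q,t}$, and observe that all higher differentials vanish. One small clarification: the exception $E_2^{n,0}\cong\Q[t^{\pm1}]$ holds only for $n$ even (for $n$ odd the column $s=n$ is odd and $E_2^{n,0}=\Q[t^{\pm1}]/(1+t)$ as with the other odd columns), which is exactly what the paper records.
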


To obtain the rational cohomology of $G_{\tilde{A}_{n-1}}$ we need to
apply the degree shift in \fullref{prop:shift}.

Notice how \fullref{prop:sha2} changes in the present
situation.

\begin{prop}\label{prop:shaq2}
We have
\begin{align*}
    H_*(G_{B_n}, \Q[t^{\pm 1}])&\cong H_*(G_{A_n}, V)\\
    H^*(G_{B_n}, Q[t^{\pm 1}])&\cong H^*(G_{A_n}, V)
\end{align*}
where  $V$ is the representation of $G_{A_n}$ defined in
\ref{df:tong}.
\end{prop}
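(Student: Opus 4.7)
The plan is to obtain \fullref{prop:shaq2} as the $q=-1$ specialization of \fullref{prop:sha2}. First I would check that setting $q=-1$ in the $G_{B_n}$-module $M[t^{\pm 1}]_{q,t}$ (with $M=\Q[q^{\pm 1}]$) turns the $(-q)$-multiplication action of $\epsilon_1,\dots,\epsilon_{n-1}$ into the trivial action and preserves the $(-t)$-multiplication of $\bar{\epsilon}_n$, so the left-hand sides of the two propositions agree. Since the complex $(C_n^*,\delta)$ of \fullref{Section3} is a free complex of $R=\Q[q^{\pm 1},t^{\pm 1}]$-modules, tensoring along $q\mapsto -1$ commutes with passing to (co)homology.

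Next I would specialize the right-hand side of \fullref{prop:sha2}. In its proof, $\mathrm{Ind}_{G_{B_n}}^{G_{A_n}}M[t^{\pm 1}]_{q,t}$ was shown, after the diagonal conjugation $U=\mathrm{Diag}(1,\dots,1,-q^{-1})$ and the substitution $u=-q^{-2}t$, to realize the Tong--Yang--Ma representation twisted by the one-dimensional weight representation $M_q$. At $q=-1$, $M_q$ becomes the trivial $\Q$-module, the conjugating matrix $U$ becomes the identity, and $u$ becomes $-t$, so the right-hand side specializes to $H_*(G_{A_n},V)$ with the Tong--Yang--Ma action of \fullref{df:tong}. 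The cohomology statement then follows from the homology one together with $[G_{A_n}:G_{B_n}]=n+1<\infty$, as already used in \fullref{prop:sha2}.

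The main technical point will be to justify that specialization commutes with Shapiro's lemma in this setting, i.e.\ that the $\Z[G_{A_n}]$-linear isomorphism constructed in the proof of \fullref{prop:sha2} descends under the base change $R\to\Q[t^{\pm 1}]$, $q\mapsto -1$, and intertwines the specialized actions. A self-contained alternative, which bypasses this point entirely, would be to rerun the Shapiro argument from scratch with coefficients $\Q[t^{\pm 1}]$: choose the same coset representatives $\alpha_i=(\sigma_i\cdots\sigma_{n-1})\sigma_n(\sigma_i\cdots\sigma_{n-1})^{-1}$ for $1\le i\le n-1$, $\alpha_n=\sigma_n$, $\alpha_{n+1}=e$, and check directly that the resulting matrices act on $\bigoplus_{i=1}^{n+1}\Q[t^{\pm 1}]e_i$ as $\rho(\sigma_i)$ with $u=-t$.
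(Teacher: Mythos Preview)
Your approach matches the paper's: the paper gives no separate proof for this proposition, presenting it simply as the $q=-1$ analog of \fullref{prop:sha2}, and your proposal is exactly that specialization.

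One clarification is worth making. Your remark that ``since $(C_n^*,\delta)$ is free over $R$, tensoring along $q\mapsto -1$ commutes with (co)homology'' is both unjustified (freeness of the terms does not by itself give base-change for cohomology) and unnecessary. The isomorphism in \fullref{prop:sha2} is established at the level of modules: $\mathrm{Ind}_{G_{B_n}}^{G_{A_n}} M[t^{\pm 1}]_{q,t}\cong M_q\otimes V$ as $\Z[G_{A_n}]$--modules over $R$. Since induction from a finite-index subgroup is just a finite direct sum, this identification base-changes along $R\to\Q[t^{\pm 1}]$, $q\mapsto -1$, to give $\mathrm{Ind}_{G_{B_n}}^{G_{A_n}}\Q[t^{\pm 1}]\cong V$ (with $u=-t$, as you correctly compute). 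One then applies Shapiro's lemma directly with coefficients $\Q[t^{\pm 1}]$; no compatibility between specialization and (co)homology is required. Your ``self-contained alternative'' in the last paragraph is precisely this clean route and is all that is needed.
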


As a consequence we have:

\begin{cor} Let $V$ be the $(n+1)$--dimensional representation of the
braid group $Br_{n+1}$ defined in \ref{df:tong}. Then the
cohomology
$$H^*(Br_{n+1};\ V)$$
is given as in \fullref{teo:ratio}.
\end{cor}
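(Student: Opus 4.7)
The plan is to chain two results already established in the text: Proposition \ref{prop:shaq2} and Theorem \ref{teo:ratio}. By Proposition \ref{prop:shaq2}, the inclusion $G_{B_n}<G_{A_n}$ together with Shapiro's lemma yields an isomorphism
\[
H^*(Br_{n+1},V)\;=\;H^*(G_{A_n},V)\;\cong\;H^*(G_{B_n},\Q[t^{\pmu}]),
\]
where on the right-hand side the first $n-1$ generators of $G_{B_n}$ act trivially and $\bar{\epsilon}_n$ acts as $(-t)$--multiplication. The right-hand side has already been computed explicitly in Theorem \ref{teo:ratio}. So I would carry out the corollary as a one-line invocation: transport the groups produced by Theorem \ref{teo:ratio} across the isomorphism of Proposition \ref{prop:shaq2}.

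The one point that needs attention is the identification of the parameter. In the sketch of proof of Proposition \ref{prop:sha2}, after choosing the coset representatives $\alpha_1,\dots,\alpha_{n+1}$ and writing down the matrices of $\sigma_i$ on $\mathrm{Ind}_{G_{B_n}}^{G_{A_n}} M[t^{\pmu}]_{q,t}$, the required isomorphism with $M_q\otimes V$ is obtained by conjugating by $U=\mathrm{Diag}(1,\dots,1,-q^{-1})$ and setting $u=-q^{-2}t$. Specialising to the rational coefficient situation used in Proposition \ref{prop:shaq2} (that is, killing the $q$--weight so that the action on the first $n-1$ generators becomes trivial) reduces this to the substitution $u=-t$. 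Consequently the quotients $\Q[t^{\pmu}]/(1+t)$ and $\Q[t^{\pmu}]/(1-t^2)$ appearing in Theorem \ref{teo:ratio} correspond, after this change of variable, to the quotients $\Q[u^{\pmu}]/(1-u)$ and $\Q[u^{\pmu}]/(1-u^2)$ as $\Q[u^{\pmu}]$--modules.

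With this identification in hand, the statement is immediate: in degrees $1\leq k\leq n-1$ and for $k=n$ when $n$ is odd, $H^k(Br_{n+1},V)$ is a rank--one cyclic torsion module killed by $1-u$, and in degree $n$ for even $n$ it is killed by $1-u^2$. The main (and essentially only) obstacle is thus the bookkeeping of the parameter change; once this is clarified, no new computation is needed beyond what is already established in Sections \ref{Section3} and the rational specialization leading to Theorem \ref{teo:ratio}.
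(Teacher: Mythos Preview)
Your proposal is correct and follows exactly the route the paper intends: the corollary is stated immediately after Proposition~\ref{prop:shaq2} with the words ``As a consequence we have'', so the paper's own argument is the one-line transport of Theorem~\ref{teo:ratio} across the isomorphism of Proposition~\ref{prop:shaq2}. Your additional paragraph tracking the parameter identification $u=-q^{-2}t\rightsquigarrow u=-t$ (under the specialization $q=-1$ that makes the first $n-1$ generators act trivially) is extra care that the paper does not spell out; it is correct and clarifies in what sense the answer ``is given as in Theorem~\ref{teo:ratio}'', namely with $(1+t)$ and $(1-t^2)$ becoming $(1-u)$ and $(1-u^2)$.
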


\section{Related topological constructions}
In \cite{S2} the orbit space of any Artin group of finite type,
which is known to be a $K(\pi,1)$ space \cite{del}, was shown to
contract over an explicit  polyhedron with explicit
identifications on its faces (a construction based on \cite{S1}
applied to Coxeter arrangements). As already suggested, few
modifications are needed to obtain a similar description of the
orbit space for Artin groups of infinite type (see also \cite{CD}
for a different construction).

We briefly resume this construction.

Let $(W,S)$ be a (finitely generated) Coxeter group, which we
realize through the Tits representation as a group of (in general,
non orthogonal) reflections in $\R^n,$ where the base-chamber
$C_0$ is the positive octant and $S$ is the set of reflections
with respect to the coordinate hyperplanes. (It is possible to
consider more general representations; see Vinberg \cite{vin}). Let
$U:=W.\overline{C}_0$ be the orbit of the closure of the base
chamber (the {\it Tits cone}). Recall from \cite{vin} that:

\begin{itemize}
    \item
            $U$ is a convex cone in $\R^n$ with vertex $0.$
    \item
            $U=\R^n$ iff $W$ is finite.
    \item
            $U^0:=int(U)$ is open in $\R^n$ and a (relative open) facet
            $F\subset \overline{C}_0$ is contained in $U^0$ iff the stabilizer
            $W_F$ is finite.
\end{itemize}

Let $\mathcal A$ be the arrangement of reflection hyperplanes of $W.$
Set

$$M({\mathcal A})\ :=\ [U^0\ +\ i\R^n]\ \setminus\ \bigcup_{H\in {\mathcal
   A}}\ H_{\C}$$
as the complement of the complexified arrangement. Notice that the
  group $W$ acts freely on $M({\mathcal A})$ so we can consider the {\it
  orbit space}
$$M({\mathcal A})_W:= M({\mathcal A})/W.$$ The associated Artin group
$G_W$ is the fundamental group of the orbit space (see Brieskorn
\cite{Br}, D{\~u}ng \cite{Ng} and van der Lek \cite{van}).

Now take one point $x_0\in C_0;$ for any subset $J\subset S$ such
that the parabolic subgroup $W_J$ is finite, construct a
$|J|$--cell in $U^0$ as the ``convex hull'' of the $W_J$--orbit of
$x_0$ in $\R^n.$

\begin{figure}[ht!]
\begin{center}
\includegraphics[scale=.8]{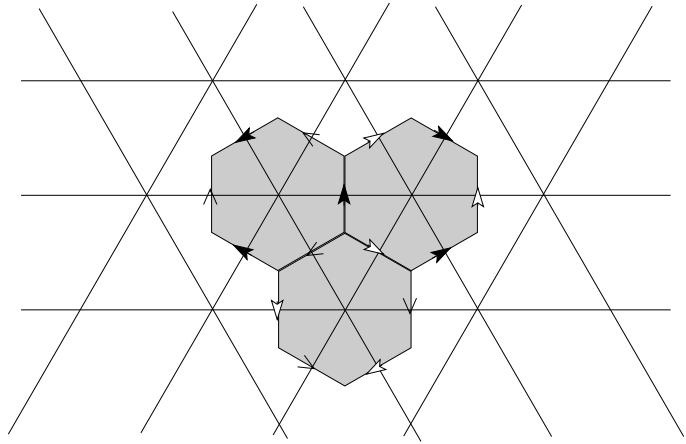}
\caption{{\it the space} $K(G_{\tilde{A_2}},1)$ {\it is given as union of $3$
  hexagons with edges glued according to the arrows (there are: $1$ 0--cell, $3$
  1--cells, $3$ 2--cells in the quotient).}}
\label{fig:affine}
\end{center}
\end{figure}

So, we obtain a finite cell complex (see \fullref{fig:affine})
which is the union of (in general, different dimensional) polyhedra,
corresponding to the
maximal subsets $J$ such that $W_J$ is finite. Now take
identifications on the faces of these polyhedra, the same as described
in \cite{S2} for the finite case (they are shown in
\fullref{fig:affine}
for the
case $\tilde{A}_2$). We obtain a finite CW--complex $X_W:$ it has a
$|J|$--cell for each $J\subset S$ such that $W_J$ is finite.

We obtain as in \cite{S2}

\begin{teo} $X_W$ is a deformation retract of the orbit space. \end{teo}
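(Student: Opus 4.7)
The plan is to adapt the construction in \cite{S2}, exploiting the fact that inside $U^0$ everything looks locally like the finite-type case. Specifically, each point $x\in U^0$ lies in a unique open facet $F$ whose stabilizer $W_F$ is finite, so in a $W_F$-invariant neighborhood of $x$ the reflection arrangement $\mathcal{A}$ restricts to the (finite) arrangement of $W_F$. This is the key local-to-global ingredient absent from the bare construction.

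First I would build the polyhedral model explicitly inside $U^0$. For each $J\sst S$ with $W_J$ finite, set $P_J$ to be the convex hull of $W_J\cdot x_0$. Since $W_J$ is finite, the orbit is finite, and because the convex hull of any finite subset of $U^0$ is contained in $U^0$ (the facets whose stabilizers are finite are exactly those lying in $U^0$, by the third bullet recalled from \cite{vin}), $P_J\sst U^0$. Taking $Y_W=\bigcup_{w\in W,\ W_J\text{ finite}} w\cdot P_J$ yields a $W$-invariant locally finite polyhedral subcomplex of $U^0$, whose quotient $Y_W/W$ is the cell complex $X_W$ described before the statement (one $|J|$-cell per finite $J$, with the same face identifications as in \cite{S2}).

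Next I would construct the deformation retraction of $M(\mathcal{A})$ onto a $W$-invariant thickening of $Y_W$. In the finite case, \cite{S2} provides an explicit $W$-equivariant retraction defined via the ``Salvetti map,'' which sends $z=x+iy\in M(\mathcal{A})$ to a canonical cell labelled by a pair (chamber for $x$, face for $y$). This construction is purely local in $x$: it only depends on the arrangement in an arbitrarily small neighborhood of $x$. Since for any $x\in U^0$ the stabilizer $W_x$ is finite, one can choose a $W_x$-invariant neighborhood on which $\mathcal{A}$ agrees with the Coxeter arrangement of $W_x$, and then apply the finite-type construction verbatim. The resulting local retractions are manifestly $W$-equivariant, so they patch together on overlaps where two such neighborhoods coincide.

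Finally I would pass to the quotient. Because the $W$-action on $U^0+i\mathbb{R}^n\setminus\bigcup H_{\C}$ is free and properly discontinuous (stabilizers of points in $U^0$ are finite and act freely on the complement), the $W$-equivariant deformation retraction descends to a deformation retraction of $M(\mathcal{A})_W$ onto $X_W$. The main obstacle is the patching step: one must check that the locally defined Salvetti retractions agree on overlaps and assemble into a globally continuous homotopy. This is handled by observing that the Salvetti map is defined cell-by-cell on the stratification by facets of $\mathcal{A}$ restricted to $U^0$, and two neighborhoods used for the local construction induce the same cell decomposition on their intersection; hence the candidate global map is well-defined, continuous, $W$-equivariant, and restricts to the identity on $Y_W$, as required.
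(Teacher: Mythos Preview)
The paper gives no proof beyond the phrase ``We obtain as in \cite{S2}'' together with the earlier remark that only ``few modifications'' are needed for the infinite-type case; your sketch spells out precisely those modifications (local finiteness of $\mathcal A$ inside $U^0$ via finiteness of stabilizers, then the construction of \cite{S1,S2} carried over $W$-equivariantly). One minor point: the Salvetti retraction is built globally from the facet stratification of the arrangement---as you yourself note in your last paragraph---so your intermediate description via local neighborhoods and patching is a detour rather than the actual mechanism, but the conclusion and the overall strategy match the paper's intended argument.
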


\begin{rem} When $W$ is an affine group, the orbit space is known
to be a $K(\pi,1)$ for types $\tilde{A}_n,\ \tilde{C}_n$ (see
\cite{Ok,CP,CD} for further classes).
\end{rem}

\begin{rem} The standard presentation for $G_W$ is quite easy to
  derive from the topological description of $X_W$; we may thus recover Van del Lek's result \cite{van}.
\end{rem}

\begin{prop} Let $K_W^{fin}:=\{J\subset S\ :\ |W_J|<\infty\}$ with the
  natural structure of simplicial complex. Then the Euler
  characteristic of the orbit space (so, of the
  group $G_W$ when such space is of type $k(\pi,1)$)) equals
$$\chi(K_W^{fin}). $$
In particular, if \ $W$\  is affine of rank \ $n+1$\  we have
$$\chi(M({\mathcal A})_W)\ =\ \chi(K_W^{fin})=\  1-\chi(S^{n-1})\ =\ (-1)^n $$
\end{prop}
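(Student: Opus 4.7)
The plan is to reduce the computation of $\chi(M(\mathcal{A})_W)$ to a purely combinatorial count on the finite CW--complex $X_W$ produced in the previous theorem, and then recognize that count as the Euler characteristic of $K_W^{fin}$.

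First, by the retraction result just stated, $\chi(M(\mathcal{A})_W)=\chi(X_W)$. The construction of $X_W$ gives exactly one $|J|$--cell for every $J\subset S$ with $W_J$ finite (including the single $0$--cell arising from $J=\emptyset$). Therefore, counting cells yields
$$
\chi(X_W)\ =\ \sum_{\substack{J\subset S\\ |W_J|<\infty}}(-1)^{|J|}.
$$
On the other hand, $K_W^{fin}$ is, by definition, the abstract simplicial complex on $S$ whose simplices are exactly those $J$ with $W_J$ finite, so the above alternating sum coincides (in the convention making the empty face contribute $+1$) with $\chi(K_W^{fin})$; equivalently, writing $\chi_{\mathrm{top}}$ for the ordinary topological Euler characteristic of the underlying space, one has $\chi(X_W)=1-\chi_{\mathrm{top}}(K_W^{fin})$. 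This gives the first equality of the proposition.

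For the affine specialization, recall the classical fact about affine Coxeter groups of rank $n+1$: a parabolic $W_J$ is finite exactly when $J\subsetneq S$, since the Coxeter diagram is connected of affine type and every proper sub-diagram is of finite type. Consequently $K_W^{fin}$ is the full $(n-1)$--skeleton of the $n$--simplex on $S$, i.e.\ the boundary $\partial\Delta^n\cong S^{n-1}$. Hence $\chi_{\mathrm{top}}(K_W^{fin})=\chi(S^{n-1})=1+(-1)^{n-1}$, and substitution gives
$$
\chi(M(\mathcal{A})_W)\ =\ 1-\chi(S^{n-1})\ =\ (-1)^n.
$$

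The steps are essentially bookkeeping once the retraction $M(\mathcal{A})_W\simeq X_W$ is in hand, so the only real subtlety is the normalization of $\chi(K_W^{fin})$: one must be careful that the contribution of the empty face (matching the base $0$--cell of $X_W$) is included, since this sign shift is precisely what converts $\chi_{\mathrm{top}}(S^{n-1})$ into $1-\chi_{\mathrm{top}}(S^{n-1})=(-1)^n$. The other point worth spelling out is the standard classification input that identifies the finite parabolics of an affine Coxeter system with the proper subsets of $S$; this is where the affine hypothesis is used in an essential way.
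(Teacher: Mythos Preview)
Your proof is correct and follows essentially the same route as the paper. The paper's proof is extremely terse---it only records the affine step, noting that $K_W^{fin}$ consists of all proper subsets of $S$ and writing the resulting identification $H_*(K_W^{fin})=\tilde{H}_{*-1}(S^{n-1})$---whereas you spell out the cell count $\chi(X_W)=\sum_{|W_J|<\infty}(-1)^{|J|}$ and are explicit about the convention (empty face included) that makes this equal to the paper's $\chi(K_W^{fin})=1-\chi_{\mathrm{top}}(|K_W^{fin}|)$; this clarification is in fact helpful, since that normalization is left implicit in the paper.
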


\begin{proof} Last statement follows from the fact that $K_W^{fin}$ contains all proper subsets
  of $S$; thus:
$$H_*(K_W^{fin})\ =\ \tilde{H}_{*-1}(S^{n-1}).\proved $$
\end{proof}

\begin{rem} The cohomology of the orbit space in case
  $\tilde{A}_n$ with trivial coefficients is deduced from \fullref{prop:shaq} and from \fullref{teo:ratio}; that with local
coefficients in the $G_{\tilde{A}_{n}}$--module $\Q[q^\pmu]$ is deduced
from \fullref{t:cohomqt}.
\end{rem}

\bibliographystyle{gtart}
\bibliography{link}

\end{document}